\documentclass[10pt,reqno]{amsart}

\pagestyle{plain}

\usepackage[english]{babel}
\usepackage{amsmath,amsthm,amssymb,amscd, enumerate, color, tikz}
\usetikzlibrary{matrix,arrows,positioning,calc}
\usepackage[all]{xy}

\usepackage{hyperref}
\hypersetup{
    colorlinks=true,
    citecolor=blue,
%    linkcolor=red,
%    anchorcolor=green,
%    hyperindex=true,
    linktocpage=true,
%    filecolor=magenta,      
%    urlcolor=cyan,
%    bookmarks=true,
}
%\urlstyle{same}

%%%%%%%%%%%%%%%%%%%%%%%
\newtheorem{theorem}{Theorem}[section]
\newtheorem{definition}[theorem]{Definition}
\newtheorem{lemma}[theorem]{Lemma}
\newtheorem{proposition}[theorem]{Proposition}
\newtheorem{corollary}[theorem]{Corollary}
\newtheorem{remark}[theorem]{Remark}

\newtheorem{example}[theorem]{Example}

\newtheorem{question}[theorem]{Question}

%%%%%%%%%%%%%%%%%%%%%%%%%
%\newcommand{\pullbackcorner}[1][dr]{\save*!/#1+1.2pc/#1:(1,-1)@^{*}\restore}
%\newcommand{\pushoutcorner}[1][dr]{\save*!/#1-1.2pc/#1:(-1,1)@^{*}\restore}
\newcommand{\C}{\mathcal{C}}
\newcommand{\T}{\mathcal{T}}
\newcommand{\F}{\mathcal{F}}

\newcommand{\D}{\mathcal{D}}
\newcommand{\U}{\mathcal{U}}
\newcommand{\Ut}{\mathcal{U}_{\mathbf{t}}}
\newcommand{\Gen}{\text{Gen}}

\newcommand{\Hom}{\text{Hom}}

\newcommand{\Ker}{\text{Ker}}

\newcommand{\tCG}{{$t$CG}}
\newcommand{\rightarrowdbl}{\rightarrow\mathrel{\mkern-14mu}\rightarrow}

\begin{document}

\title[$t$CG Torsion Pairs]{{\tCG} torsion pairs}

\author[D. Bravo]{Daniel Bravo}  
\address{
 Instituto de Ciencias F\'isicas y Matem\'aticas \\ Facultad de Ciencias \\
 Universidad Austral de Chile \\
 Valdivia, Chile}
\email{daniel.bravo@uach.cl}
\thanks{The first author was partially supported by CONICYT + FONDECYT/Iniciaci\'on + 11130324.}

\author[C. E. Parra]{Carlos E. Parra }  
\address{
 Instituto de Ciencias F\'isicas y Matem\'aticas \\ Facultad de Ciencias \\
 Universidad Austral de Chile \\
 Valdivia, Chile}
\email{carlos.parra@uach.cl}
\thanks{The second author was partially supported by CONICYT + FONDECYT/Iniciaci\'on + 11160078.}

\subjclass[2010]{Primary 18E40; Secondary 18E15}

\keywords{Torsion Pairs, $t$-structures, Compactly Generated, {\tCG} torsion pairs}

\date{\today}

\begin{abstract}
 We investigate conditions for when the $t$-structure of Happel-Reiten-Smal\o \ associated to a torsion pair is a compactly generated $t$-structure. The concept of a {\tCG} torsion pair is introduced and for any ring $R$, we prove that $\mathbf{t}=(\T,\F)$ is a {\tCG} torsion pair in $R\text{-Mod}$ if, and only if, there exists, $\{T_{\lambda}\}$ a set of finitely presented $R$-modules in $\mathcal{T}$, such that $\mathcal{F}=\bigcap \text{Ker}({\Hom}_{R}(T_{\lambda},?))$. We also show that every {\tCG} torsion pair is of finite type, and show that the reciprocal is not true. Finally, we give a precise description of the $t$CG torsion pairs over Noetherian rings and von Neumman regular rings.
\end{abstract}

\subjclass[2010]{Primary 18E40; Secondary 18E15}
\keywords{Torsion Pairs, $t$-structures, Compactly Generated, {\tCG} torsion pairs}

\maketitle

\tableofcontents

\section{Introduction}

The notion of a \emph{torsion pair} was introduced in the sixties by Dickson (see \cite{D}) in the setting of abelian categories, generalizing the classical notions for abelian groups. Since then, torsion pairs have found many applications in the study of localizations, tilting theory, category theory, etc. Indeed, an important class of examples of torsion pairs in triangulated categories is provided by the concept of \emph{$t$-structure}  introduced by Beilinson, Bernstein and Deligne \cite{BBD}, in their study of the perverse sheaves over an analytic or algebraic variety stratified by some closed subsets.   This notion allows us to associate, to an object of an arbitrary triangulated category, its corresponding ``objects of homology'', which belong to some abelian subcategory of such triangulated category. Such subcategory is called the heart of the $t$-structure.

In the  nineties, Happel, Reiten and Smal\o\ observed that there is a natural way to associate a $t$-structure to the derived category of a given abelian category endowed with a torsion pair (see \cite{HRS}). The Happel-Reiten-Smal\o\  $t$-structure is perhaps the most well known $t$-structure for triangulated categories. Nevertheless, other $t$-structures, such as the compactly generated $t$-structures, have also been well documented in the literature (see \cite{AJS}, \cite{AJSo}, \cite{PS4}). Some of these compactly generated $t$-structures,  when certain conditions on the ambient triangulated category are imposed,  have an explicit description for the co-aisle  (see \cite{AJSo}). For these two types of $t$-structures, several authors have investigated conditions for when the heart of such $t$-structures is a  Grothendieck category or a module category (see \cite{CGM}, \cite{CMT}, \cite{HKM}, \cite{MT}, \cite{PS1}, \cite{PS2}, \cite{PS3}, \cite{PS4}). 

In particular, \cite{PS4} shows that over a commutative Noetherian ring $R$, the heart of almost every compactly generated $t$-structure in $\mathcal{D}(R)$, the derived category of the ring $R$, is a Grothendieck category.  On the other hand, Theorem 3.7 in \cite{PS1} shows that a countable direct limit of exact sequences in the heart of a compactly generated $t$-structure is always exact. Recall that over a Grothendieck category, direct limits of exact sequences are exact.

Hence, given any ring $R$, the following question seems natural to ask: is the heart of a compactly generated $t$-structure in $\mathcal{D}(R)$ an AB5  category? We tackle this question for the $t$-structure of Happel-Reiten-Smal\o.  The main goal of this article is to provide a positive answer to this question, through the concept of \emph{{\tCG} torsion pairs} (see Definition \ref{tCG-definition}), and to study the relation of torsion pairs of finite type and the {\tCG} torsion pairs. 

The organization of this paper is as follows. In Section \ref{S:Prelim} we give all the preliminaries and terminology needed in the rest of the paper. Section \ref{S:tCG} introduces the reader to the notion of a {\tCG} torsion pair, and contains the main result of this article, namely Theorem \ref{Teo. tCG}, which gives a characterization of the {\tCG} torsion pairs. This characterization result, then allows us to describe the  {\tCG} torsion pairs  over Noetherian rings (see Theorem \ref{Teo tCG noetherian}). For coherent rings, we establish  an injective function between the set of the torsion pairs in $fp(R\text{-Mod})$ and the set of the {\tCG} torsion pairs (see Theorem \ref{Teo. Coherent}). 
In Section \ref{S:Construction}, we study the relation between the {\tCG} torsion pair and the left constituent pair of a TTF triple in $R\text{-Mod}$. In this direction, we obtain an example of a torsion pair of finite type that is not a $t$CG torsion pair. Finally, over a von Neumann regular ring, we show that the only $t$CG torsion pairs   are left constituent pairs of a TTF-triple.

\section{Preliminares and terminology} \label{S:Prelim}

The concepts that we shall introduce in this section are applied in the case of module categories, but sometimes we will use them in the more general context of \emph{Abelian categories} and it is in this context that we define them. Hence in what follows, $\mathcal{A}$ will denote an AB3 abelian category, that is, $\mathcal{A}$ is an abelian category with coproducts. Let $X$ and $V$ be objects of $\mathcal{A}$. We say that $X$ is $V$-\emph{generated}  when there is an epimorphism  ${V^{(I)} \rightarrowdbl X}$, for some set $I$. We will denote by $\Gen(V)$  the class of $V$-generated  objects.  
For a class of objects $\mathcal{S}$ in $\mathcal{A}$, we will use the following notation $\mathcal{S}^{\perp}:=\left\{X \in \mathcal{A} : \text{Hom}_{\mathcal{A}}(S,X)=0, \text{ for all }S \in \mathcal{S}\right\}$ and $^{\perp}\mathcal{S}:=\left\{X \in \mathcal{A} : \text{Hom}_{\mathcal{A}}(X,S)=0, \text{ for all }S \in \mathcal{S}\right\}$.

A \emph{torsion pair} in $\mathcal{A}$ is a pair $\mathbf{t}=(\mathcal{T,F})$ of full subcategories of $\mathcal{A}$ satisfying the following two conditions:
\begin{enumerate}
\item  $\T={}^{\perp}\F$;
\item  For each object $X$ of $\mathcal{A}$, there is an exact sequence
\[
0 \longrightarrow  T_X \longrightarrow X \longrightarrow F_X \longrightarrow 0
\]
\end{enumerate}
where $T_X \in \T$ and $F_X\in \F$. The objects $T_{X}$ and $F_X$ in the previous exact sequence are uniquely determined, up to isomorphism, and the assignment $X \leadsto T_X$ (resp. $X \leadsto F_X$) underlies a functor ${t: \mathcal{A} \rightarrow \T }$ (resp. ${(1:t):\mathcal{A} \rightarrow  \F})$, which is right (resp. left) adjoint to the inclusion functor ${\T \hookrightarrow \mathcal{A}}$ (resp. ${\F  \hookrightarrow  \mathcal{A}}$). 

The composition ${ \mathcal{A} \xrightarrow{t} \T \hookrightarrow  \mathcal{A}}$ (resp. ${ \mathcal{A} \xrightarrow{(1:t)} \F \hookrightarrow \mathcal{A}}$), which we will still denote by $t$ (resp. $(1:t)$), is called the \emph{torsion radical} (resp. \emph{torsion coradical}) associated to $\mathbf{t}$. The torsion pair $\mathbf{t}=(\mathcal{T,F})$ is called \emph{hereditary} when $\T$ is closed under taking subobjects in $\mathcal{A}$. 

In the case of $R$-Mod, where $R$ is a unital associative ring, we are interested in investigating a particular case of hereditary torsion pairs, namely the {TTF-triples}. A \emph{TTF-triple} is a triple of classes $(\C, \T, \F)$ in $R$-Mod such that $(\C,\T)$ and $(\T,\F)$ are torsion pairs. In this case \cite[Proposition VI.6.12]{S} characterizes the TTF-triples in terms of two-sided idempotent ideal of $R$. More precisely, $(\C,\T,\F)$ is a TTF-triple if and only if there exists $\mathfrak{a}$, a two-sided idempotent ideal of  $R$, such that $\mathcal{C}=\text{Gen}(\mathfrak{a})=\{C\in R\text{-Mod}: \mathfrak{a}C=C\}$, $\mathcal{T}=\{T\in R\text{-Mod}: \mathfrak{a}T=0\}$ and $\mathcal{F}=\text{Ker}(\text{Hom}_{R}({R}/{\mathfrak{a}},?))$. In reference to the ideal $\mathfrak{a}$, we will denote such TTF-triple as $(\C_\mathfrak{a},\T_\mathfrak{a},\F_\mathfrak{a})$.

In the sequel, we let $(\mathcal{D},?[1])$ be  a triangulated category, and denote the triangles in $\D$ by ${X  \rightarrow Y  \rightarrow Z  \xrightarrow{+}  }$. An additive functor ${H:\D  \rightarrow \mathcal{A}}$, where $\mathcal{A}$ is an abelian category, is called \emph{cohomological}, if for any triangle ${X  \rightarrow Y  \rightarrow Z  \xrightarrow{+}  }$ in $\D$, we get the following long exact sequence in $\mathcal{A}$:
\[
{\cdots  \longrightarrow H^{n-1}(Z) \longrightarrow H^{n}(X) \longrightarrow H^{n}(Y) \longrightarrow H^{n}(Z) \longrightarrow \cdots},
\]
where $H^{n}(?):=H \circ (?[n])$, for each integer $n$. A pair $(\mathcal{U,V})$ of full subcategories in $\D$, is called a \emph{$t$-structure}, when both class are closed under  direct summands, and satisfies the following assertions:
\begin{enumerate}
\item $\Hom_{\D}(U,V[-1])=0$, for all $U\in \mathcal{U}$ and for all $V\in \mathcal{V}$.
\item $\mathcal{U}[1] \subseteq \mathcal{U}$.
\item For each $X \in \text{Ob}(\D)$, there is a triangle ${U_X \rightarrow X \rightarrow V_X \xrightarrow{+}}$ in $\D$, where $U_X\in \mathcal{U}$ and $V_X \in \mathcal{V}[-1]$.
\end{enumerate}
In this case, $\mathcal{U}$ is called the \emph{aisle} of the t-structure, and $\mathcal{V}$ is called the \emph{co-aisle} of the t-structure. Moreover, $\mathcal{V}=\mathcal{U}^{\perp}[1]$ and $\mathcal{U}= {}^{\perp}(\mathcal{V}[-1])={}^{\perp}(\mathcal{U}^{\perp})$, hence we will denoted such t-structure by $(\mathcal{U},\mathcal{U}^{\perp}[1])$.

If $(\mathcal{U},\mathcal{U}^{\perp}[1])$ is a t-structure in $\D$, then the full subcategory $\mathcal{H}=\mathcal{U} \cap \mathcal{V}=\U \cap \U^{\perp}[1]$ is an abelian category, called the \emph{heart} of the $t$-structure and there is a cohomological functor $H^{0}_{\mathcal{U}}:\D \rightarrow \mathcal{H}$ (see \cite{BBD}). If $\D=\D(R)$ and $\mathcal{S}$ is a set of objects in $\D$, the smallest full subcategory of $\D$, containing $\mathcal{S}$, closed under coproducts, extensions and positive shifts is the aisle of a $t$-structure (cf. \cite[Proposition 3.2]{AJSo}). In that case, if $(\U,\U^{\perp}[1])$ denotes such $t$-structure, then $\U^{\perp}$ consists of the $Y\in \D$ such that $\Hom_{\D}(S[n],Y)=0$, for all $S\in \mathcal{S}$ and for all integers $n\geq 0$. In this case, we will write that $\U=\text{aisle}(\mathcal{S})$, moreover, the $t$-structure $(\text{aisle}(\mathcal{S}),\text{aisle}(\mathcal{S})^{\perp}[1])$ is called \emph{compactly generated} if $\mathcal{S}$ consist of compact objects (i.e. for each $S\in \mathcal{S}$ the functor $\Hom_{\D}(S,?)$ commutes with coproducts) and we say that $\mathcal{S}$ is a \emph{set of compact generators} of the aisle. Recall that the compact objects of $\D(R)$ are the complexes which are quasi-isomorphic to bounded complexes of finitely generated projective modules (see \cite{R}).

For the rest of this section, we assume that $R$ is a commutative Noetherian ring and we denote by $\text{Spec}(R)$ its spectrum. A subset $Z$ of $\text{Spec}(R)$ is \emph{stable under specialization} if, for any pair of prime ideals $\mathfrak{p} \subseteq \mathfrak{q}$, with $\mathfrak{p}\in Z$, it holds that $\mathfrak{q}\in Z$. Equivalently, $Z$ is a union of closed subsets of $\text{Spec}(R)$, with respect to the Zariski topology of $\text{Spec}(R)$. Such a subset will be called \emph{sp-subset} in the sequel. The typical example is the support of an $R$-module N, denoted by $\text{Supp}(N)$, which consists of the prime ideals $\mathfrak{p}$ such that $N_\mathfrak{p}:=R_{\mathfrak{p}} \otimes_{R} N\neq 0$. We have the following description for hereditary torsion pairs in $R\text{-Mod}$.

\begin{proposition}{\cite[Chapter VI \S5,6]{S}} \label{Prop:sp-subset}
The assignment $Z \leadsto (\T_{Z},\T_{Z}^{\perp})$ defines a bijection between the sp-subsets of $\text{Spec}(R)$ and the hereditary torsion pairs in $R\text{-Mod}$, where $\T_{Z}$ is the class of the $R$-modules $T$ such that $\text{Supp}(T)\subseteq Z$. Its inverse takes $(\T,\T^{\perp})$ to the set $Z_{\T}$ of prime ideals $\mathfrak{p}$ such that $R/\mathfrak{p}$ is in $\T$.
\end{proposition}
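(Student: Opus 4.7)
The plan is to establish the bijection through four verifications. First, I would check that $\T_Z$ is a hereditary torsion class for any sp-subset $Z$: using the exactness properties $\text{Supp}(M) = \text{Supp}(M') \cup \text{Supp}(M'')$ for short exact sequences $0 \to M' \to M \to M'' \to 0$ and $\text{Supp}(\bigoplus_\alpha M_\alpha) = \bigcup_\alpha \text{Supp}(M_\alpha)$, the class $\T_Z$ is closed under subobjects, quotients, extensions, and coproducts, hence gives a hereditary torsion class in $R\text{-Mod}$. Second, I would show $Z_\T$ is sp-closed by noting that if $R/\mathfrak{p} \in \T$ and $\mathfrak{p} \subseteq \mathfrak{q}$, then $R/\mathfrak{q}$ is a quotient of $R/\mathfrak{p}$, hence in $\T$.

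Third, I would prove $Z = Z_{\T_Z}$ using that $\text{Supp}(R/\mathfrak{p}) = \{\mathfrak{q} \in \text{Spec}(R) : \mathfrak{q} \supseteq \mathfrak{p}\}$: this set contains $\mathfrak{p}$ and is contained in $Z$ precisely when $\mathfrak{p} \in Z$ (using sp-closure for one implication, and $\mathfrak{p} \in V(\mathfrak{p})$ for the other). The substance lies in proving $\T = \T_{Z_\T}$. For the inclusion $\T \subseteq \T_{Z_\T}$, I would take $M \in \T$ and $\mathfrak{p} \in \text{Supp}(M)$, choose $m \in M$ with $\text{ann}(m) \subseteq \mathfrak{p}$ (possible because $M_\mathfrak{p} \neq 0$), and observe that $Rm \cong R/\text{ann}(m)$ lies in $\T$ by heredity, so its quotient $R/\mathfrak{p}$ also lies in $\T$, giving $\mathfrak{p} \in Z_\T$.

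For the reverse inclusion $\T_{Z_\T} \subseteq \T$, I would take $M \in \T_{Z_\T}$ and write it as the directed union of its finitely generated submodules $N$, each satisfying $\text{Supp}(N) \subseteq \text{Supp}(M) \subseteq Z_\T$. Here the Noetherian hypothesis enters essentially: each such $N$ admits a finite filtration $0 = N_0 \subsetneq N_1 \subsetneq \cdots \subsetneq N_r = N$ with $N_i/N_{i-1} \cong R/\mathfrak{p}_i$ for primes $\mathfrak{p}_i$, and an easy induction gives $\text{Supp}(R/\mathfrak{p}_i) \subseteq \text{Supp}(N) \subseteq Z_\T$, so each $R/\mathfrak{p}_i \in \T$. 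Closure under extensions then yields $N \in \T$, and since $M$ is a quotient of $\bigoplus_N N$, we conclude $M \in \T$. The main obstacle is precisely this last inclusion: without the Noetherian assumption, one lacks the prime filtrations that allow a purely support-based condition to recover membership in the torsion class.
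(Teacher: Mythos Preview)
The paper does not supply its own proof of this proposition: it is quoted directly from Stenstr\"om \cite[Chapter VI \S5,6]{S} and used as a black box. Your argument is the standard one and is correct; it is essentially the proof one finds in Stenstr\"om or in any treatment of hereditary torsion theories over commutative Noetherian rings. One small remark: in the step $\T \subseteq \T_{Z_\T}$ you should also note that the existence of $m \in M$ with $\mathrm{ann}(m) \subseteq \mathfrak{p}$ requires $R$ commutative (so that $\mathrm{ann}(m)$ is a two-sided ideal and $M_\mathfrak{p} \neq 0$ gives exactly this), which is of course the standing hypothesis in this part of the paper.
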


A \emph{filtration by supports} of $\text{Spec}(R)$ is a decreasing map ${\phi:\mathbb{Z} \rightarrow \text{P(Spec}(R))}$, such that $\phi(i)$ is an sp-subset for each $i\in \mathbb{Z}$; here $\text{P(Spec}(R))$ is the power set of $\text{Spec}(R)$. We will refer to a filtration by supports of $\text{Spec}(R)$ simply by an sp-filtration of $\text{Spec}(R)$. In \cite{AJS}, Alonso, Jerem\'ias and Saor\'in associated to each sp-filtration ${\phi: \mathbb{Z} \rightarrow \text{P(Spec}R)}$, the $t$-structure $(\U_{\phi},\U_{\phi}^{\perp}[1])$ in $\mathcal{D}(R)$, where $\U_{\phi}:=\text{aisle}(\{R/\mathfrak{p}[-i]: i\in \mathbb{Z} \text{ and } \mathfrak{p} \in \phi(i)\})$.

\section{{\tCG} torsion pairs} \label{S:tCG}
Throughout this section, $R$-Mod is the category of left $R$-modules and $\mathbf{t}=(\mathcal{T,F})$ is a torsion pair in $R\text{-Mod}$. The \emph{$t$-structure of Happel-Reiten-Smal\o} in $\D(R)$ associated to the torsion pair $\mathbf{t}$, is  given by $(\Ut,\Ut^{\perp}[1]):=(\Ut,\mathcal{W}_{\mathbf{t}})$, where:
\[
{\Ut=\{X \in \D^{\leq 0}(R) :  H^{0}(X)\in \T \} \quad  \text{and} \quad \mathcal{W}_{\mathbf{t}}=\{X \in \D^{\geq -1}(R)  :  H^{-1}(X)\in \F \} .}
\]
The heart $\mathcal{H}_{\mathbf{t}}$ of this $t$-structure consists of the complexes $M\in \D^{[-1,0]}(R)$ such that $H^{-1}(M)\in \F$ and $H^{0}(M)\in \T$. 

\begin{definition} \label{tCG-definition}
Let $\mathbf{t}=(\mathcal{T,F})$ be a torsion pair in $R\text{-Mod}$. We say that $\mathbf{t}$ is a \emph{{\tCG} torsion pair} when $(\mathcal{U}_{\mathbf{t}},\mathcal{U}_{\mathbf{t}}^{\perp}[1])$ is a compactly generated $t$-structure.
\end{definition}

\begin{example} \label{ejemplo-sp}
Let $R$ be a commutative Noetherian ring. In $R$-Mod, the {\tCG} torsion pairs coincide with the hereditary torsion pairs. Indeed, if $\mathbf{t}=(\mathcal{T,F})$ is a hereditary torsion pair, then  Proposition \ref{Prop:sp-subset} says that there exists $Z$ an sp-subset of $\text{Spec}(R)$ such that $\mathbf{t}=(\mathcal{T,F})=(\T_Z,\T_Z^{\perp})$. Now, we define an sp-filtration as follows:  
\[
\phi(n)= \left\{ \begin{array}{lcc}
             \emptyset &   if & n > 0 \\
              Z &  if & n=0 \\
              \text{Spec}(R) &  if  & n<0 .
             \end{array}
   \right.
\]
By \cite[Theorem 3.11]{AJS}, we obtain that $\mathcal{U}_{\phi}=\{X\in \D(R) : \text{Supp}(H^{j}(X))\subseteq \phi(j), \text{ for all j }\in \mathbb{Z}\}=\Ut$. Now using \cite[Theorem 3.10]{AJS}, we get that $(\mathcal{U}_{\phi},\mathcal{U}_{\phi}^{\perp}[1])=(\Ut,\Ut^{\perp}[1])$ is a compactly generated $t$-structure.

On the other hand, let $\mathbf{t}=(\mathcal{T,F})$ be a {\tCG} torsion pair. By \cite[Theorem 3.10]{AJS}, there exists an sp-filtration $\phi:\mathbb{Z} \rightarrow { \text{P}(\text{Spec}(R))}$, such that $\mathcal{U}_{\mathbf{t}}=\mathcal{U}_{\phi}$. From the description of $\mathcal{U}_{\phi}$, we see that $\phi(k)=\emptyset$, for all integers $k>0$ and $\phi(k)=\text{Spec}(R)$, for all integers $k\leq -1$. Moreover, $\mathcal{T}=\mathcal{T}_{0}$, where $(\mathcal{T}_{0},\mathcal{F}_{0})$ is the hereditary torsion pair associated to the sp-subset $\phi(0)$.
\end{example}

The following result characterizes  the {\tCG}  torsion pairs in terms of the torsion pair in $R\text{-Mod}$. We will use the following notation: for each $M \in R$-Mod, we denote by $M[0]$ the complex concentrated in degree $0$ with $M$, and zero everywhere else.

\begin{theorem}\label{Teo. tCG}
Let $\mathbf{t}=(\mathcal{T,F})$ be a torsion pair in $R$-Mod. Then, $\mathbf{t}$ is a {\tCG} torsion pair, if and only if, there exists a set $\{T_{\lambda}\}_{\lambda \in \Lambda}$  of finitely presented $R$-modules in $\mathcal{T}$, such that $\mathcal{F}={\bigcap}_{\lambda \in \Lambda} \text{Ker}(\text{\Hom}_{R}(T_{\lambda},?))$. 
\end{theorem}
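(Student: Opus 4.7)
The plan is to realize $\Ut$ as the aisle generated by the set $\mathcal{S}=\{R[1]\}\cup\{T_\lambda[0]:\lambda\in\Lambda\}$: the object $R[1]$ handles the copy of $\D^{\leq -1}(R)$ contained in $\Ut$, while the stalks $T_\lambda[0]$ carry the torsion part $\T[0]$. The whole argument rests on a single orthogonality computation. For any compact $S\in\D^{\leq 0}(R)$ and any $Y\in\D^{\geq 0}(R)$, the truncation triangles $\tau^{\leq -1}S\to S\to H^{0}(S)[0]\xrightarrow{+}$ and $H^{0}(Y)[0]\to Y\to \tau^{\geq 1}Y\xrightarrow{+}$, combined with the standard vanishing $\Hom_{\D(R)}(\D^{\leq -1}(R),\D^{\geq 0}(R))=0$, yield $\Hom_{\D(R)}(S,Y)\cong \Hom_{R}(H^{0}(S),H^{0}(Y))$ and $\Hom_{\D(R)}(S[n],Y)=0$ for all $n>0$.

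For the ``if'' direction, suppose $\{T_\lambda\}$ is as in the statement. Each $T_\lambda$ being finitely presented guarantees that $T_\lambda[0]$ is quasi-isomorphic to a two-term complex of finitely generated projectives, hence compact in $\D(R)$; with $R[1]$ compact as well, the set $\mathcal{S}=\{R[1]\}\cup\{T_\lambda[0]\}$ consists of compact objects contained in $\Ut$. Let $\mathcal{U}'=\text{aisle}(\mathcal{S})$; the inclusion $\mathcal{U}'\subseteq\Ut$ is immediate. For the reverse, $(\mathcal{U}')^{\perp}$ consists of those $Y\in\D(R)$ with $\Hom_{\D(R)}(S[n],Y)=0$ for all $S\in\mathcal{S}$ and $n\geq 0$; this unpacks as $Y\in\D^{\geq 0}(R)$ (from $\Hom_{\D(R)}(R[1+n],Y)=H^{-1-n}(Y)=0$) together with $\Hom_{R}(T_\lambda,H^{0}(Y))=0$ for all $\lambda$ (via the orthogonality computation applied to $S=T_\lambda[0]$). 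By hypothesis, this last condition is equivalent to $H^{0}(Y)\in\F$, so $(\mathcal{U}')^{\perp}=\Ut^{\perp}$ and hence $\mathcal{U}'=\Ut$. Thus $\mathbf{t}$ is {\tCG}.

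For the ``only if'' direction, suppose $\Ut=\text{aisle}(\mathcal{S}')$ for some set $\mathcal{S}'$ of compact objects. Each $S\in\mathcal{S}'$ lies in $\Ut\subseteq\D^{\leq 0}(R)$ and is perfect. The key technical step, which I expect to be the main obstacle, is to show that such an $S$ admits a representative as a bounded complex of finitely generated projectives concentrated in non-positive degrees: starting from any bounded-projective representative $P^{\bullet}$, one removes top positive-degree components by descending induction, using that $H^{i}(S)=0$ for $i>0$ forces $d^{i-1}$ to be surjective and that projectivity of $P^{i}$ splits this surjection, so $P^{i-1}$ can be replaced by $\ker(d^{i-1})$ (still finitely generated projective as a summand of $P^{i-1}$) and $P^{i}$ dropped. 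Granting this, $T_{S}:=H^{0}(S)$ is finitely presented and belongs to $\T$. Set $\{T_\lambda\}:=\{T_S:S\in\mathcal{S}'\}$. The inclusion $\F\subseteq\bigcap \Ker(\Hom_{R}(T_\lambda,?))$ follows from $T_\lambda\in\T={}^{\perp}\F$; conversely, if $F$ lies in the intersection, the orthogonality computation gives $\Hom_{\D(R)}(S[n],F[0])=0$ for all $S\in\mathcal{S}'$ and $n\geq 0$, hence $F[0]\in\Ut^{\perp}$ and $F=H^{0}(F[0])\in\F$.
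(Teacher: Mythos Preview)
Your proof is correct and follows essentially the same route as the paper's: both directions hinge on the set $\{R[1]\}\cup\{T_\lambda[0]\}$ (the paper writes the $T_\lambda[0]$ as explicit two-term projective complexes) and on the same truncation-triangle computation reducing $\Hom_{\D(R)}(S,Y)$ to $\Hom_{R}(H^{0}(S),H^{0}(Y))$; the splitting argument showing $H^{0}(S)$ is finitely presented for perfect $S\in\D^{\leq 0}(R)$ is also identical. The only cosmetic difference is that for the inclusion $\bigcap_\lambda\Ker(\Hom_R(T_\lambda,?))\subseteq\F$ the paper passes to $t(M)$ and uses $t(M)[0]\in\Ut\cap\Ut^{\perp}=0$, whereas you argue directly that $F[0]\in\Ut^{\perp}$; both are equally valid.
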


\begin{proof}
Let $\mathcal{S}=\{S_{\lambda}\}_{\lambda \in \Lambda}$ be a set of compact generators of the aisle $\mathcal{U}_{\mathbf{t}}=\{X \in D^{\leq 0}(R): H^{0}(X)\in \mathcal{T}\}$. Without loss of generality, we can assume that each $S_{\lambda}$ is of the form:
\[
{
S_\lambda:= \cdots \xrightarrow{} 0 \xrightarrow{} P_{\lambda}^{n_{\lambda}} \xrightarrow{d_{\lambda}^{n_{\lambda}}}   \cdots \xrightarrow{} P^{0}_{\lambda} \xrightarrow{d^{0}_{\lambda}}  \cdots \xrightarrow{d_{\lambda}^{m_{\lambda}-1}}  P^{m_{\lambda}}_{\lambda} \xrightarrow{}  0 \xrightarrow{} \cdots
},
\]
where each $P_{\lambda}^{k}$ is a finitely generated projective $R$-module (see \cite{R}). Since each $S_{\lambda}$ is in $D^{\leq 0}(R)$,  we obtain that  for $m_{\lambda} \geq 2$, the following exact sequence is split:
\[
{
0 \xrightarrow{} \text{Ker}(d^{m_{\lambda}-1})=\text{Im}(d_{\lambda}^{m_{\lambda}-2}) \xrightarrow{} P_{\lambda}^{m_{\lambda}-1} \xrightarrow{} \text{Im}(d_{\lambda}^{m_{\lambda}-1})=P^{m_{\lambda}}_{\lambda} \xrightarrow{} 0
}.
\]
Thus $\text{Im}(d^{m_{\lambda}-2}_{\lambda})$ is a finitely generated projective $R$-module. Using this argument in a recursive way, we obtain that $\text{Im}(d^{0}_{\lambda})$ is also a finitely generated projective $R$-module and thus, so is $\text{Ker}(d^{0}_{\lambda})$. Next, since $\text{Im}(d_{\lambda}^{-1})$ is a finitely generated $R$-module, the following exact sequence shows that $H^{0}(S_{\lambda})$ is a finitely presented $R$-module:
\[
{
0 \xrightarrow{} \text{Im}(d^{-1}_{\lambda}) \xrightarrow{} \text{Ker}(d_{\lambda}^{0}) \xrightarrow{} H^{0}(S_{\lambda}) \xrightarrow{} 0
}.
\]

We now check that   $\mathcal{F}={\bigcap}_{\lambda \in \Lambda} \text{Ker}(\text{Hom}_{R}(H^{0}(S_{\lambda}),?))$. Indeed, if we fix an $R$-module $M$ in ${\bigcap}_{\lambda \in \Lambda} \text{Ker}(\text{Hom}_{R}(H^{0}(S_{\lambda}),?))$, then it is clear that $t(M)$ also is in ${\bigcap}_{\lambda \in \Lambda} \text{Ker}(\text{Hom}_{R}(H^{0}(S_{\lambda}),?))$. Therefore: 
\[
\begin{array}{rl}
0&= \text{Hom}_{R}(H^{0}(S_{\lambda}),t(M)) \\  
    & \cong \text{Hom}_{\mathcal{D}(R)}(H^{0}(S_{\lambda})[0],t(M)[0]) \\
    & \cong \text{Hom}_{\mathcal{D}(R)}(S_{\lambda},t(M)[0]),
\end{array}
\]
where the last isomorphism follows by applying the contravariant cohomological functor ${\Hom}_{\mathcal{D}(R)}(?,t(M)[0])$ to the canonical triangle obtained from the $t$-structure $\left( D^{\leq -1}(R),D^{\geq -1}(R) \right)$:
\[
{
\tau^{\leq -1}(S_{\lambda}) \xrightarrow{} S_{\lambda} \xrightarrow{} H^{0}(S_{\lambda})[0] \xrightarrow{+} 
}.
\]
It follows that ${\Hom}_{\mathcal{D}(R)}(S_{\lambda}[n],t(M)[0])=0$ for all $S_{\lambda}\in \mathcal{S}$ and integers $n \geq 0$. This implies that $t(M)[0]\in \mathcal{U}_{\mathbf{t}} \cap \mathcal{U}_{\mathbf{t}}^{\perp}=\{ 0\}$ and therefore $M\in \mathcal{F}$.

Conversely, for each $\lambda$, we will denoted by, $S_{\lambda}$, the complex:
\[
\begin{tikzpicture}[node distance=1.35cm]
\node (S) {$S_{\lambda}:=$};
\node (dl) [right of=S] {$\cdots$};
\node (0l) [right of=dl] {$0$};
\node (Rn) [right of=0l] {$R^{(n_{\lambda})}$};
\node (K) [above right of=Rn] {$K_{\lambda}$};
\node (Rm) [below right of=K] {$R^{(m_{\lambda})}$};
\node (0r) [right of=Rm] {$0$};
\node (dr) [right of=0r] {$\cdots$,};
\draw[->] (dl) -- (0l);
\draw[->] (0l) -- (Rn);
\draw[->] (Rn) -- node[above]{\scriptsize $d_{\lambda}$} (Rm);
\draw[->] (Rm) -- (0r);
\draw[->] (0r) -- (dr);
\draw[->>] (Rn) -- (K);
\draw[right hook->] (K) -- (Rm);
\end{tikzpicture}
\]
where $m_\lambda, n_{\lambda}$ are positive integers, $R^{(m_{\lambda})}$ is in degree 0, and $K_{\lambda}$ is the finitely generated $R$-module given by the kernel of the epimorphism ${R^{(m_\lambda)} \rightarrowdbl T_{\lambda}}$. Note that $\mathcal{S}_0 :=\{S_{\lambda}\} \cup \{ R[1]\}$ is a set of compact complexes in $\mathcal{U}_{\mathbf{t}}$, and therefore $\text{aisle}(\mathcal{S}_0)\subseteq \mathcal{U}_{\mathbf{t}}$. 
On the other hand, let $X$ be a complex in $\text{aisle}(\mathcal{S}_0)^{\perp}$. Then we obtain that
\[
0=\text{Hom}_{\mathcal{D}(R)}((R[1])[n],X)=\text{Hom}_{\mathcal{D}(R)}(R,X[-1-n])=H^{-1-n}(X),
\]
for all integers $n\geq 0$, showing that $X\in \mathcal{D}^{\geq 0}(R)$. 

Now, applying the cohomological functor ${\Hom}_{\mathcal{D}(R)}(S_{\lambda},?)$ to the triangle 
\[
{H^{0}(X)[0] \rightarrow X \rightarrow \tau^{>0}(X) \xrightarrow{+}},
\]
gives that $\text{Hom}_{\mathcal{D}(R)}(S_{\lambda},X)\cong \text{Hom}_{\mathcal{D}(R)}(S_{\lambda},H^{0}(X)[0])$. But,  
\[
\begin{array}{rl}
0= & \text{Hom}_{\mathcal{D}(R)}(S_{\lambda},X) \\ 
 \cong & \text{Hom}_{\mathcal{D}(R)}(S_{\lambda},H^{0}(X)[0]) \\ 
 \cong & \text{Hom}_{\mathcal{D}(R)}(H^{0}(S_{\lambda})[0],H^{0}(X)[0]) \\ 
 = & \text{Hom}_{R}(T_{\lambda},H^{0}(X)).
\end{array}
\] 

Since $\F={\bigcap}_{\lambda \in \Lambda} \text{Ker}(\text{Hom}_{R}(T_{\lambda},?))$, it follows that $H^{0}(X)\in \mathcal{F}$. Therefore, $\text{aisle}(\mathcal{S}_0)^{\perp} \subseteq \mathcal{U}_{\mathbf{t}}^{\perp}$,  and since $\mathcal{U}_{\mathbf{t}}^{\perp} \subseteq \text{aisle}(\mathcal{S}_0)^{\perp}$, then we obtain that $(\mathcal{U}_{\mathbf{t}},\mathcal{U}_{\mathbf{t}}^{\perp}[1])=(\text{aisle}(\mathcal{S}_0),\text{aisle}(\mathcal{S}_0)^{\perp}[1])$ is a compactly generated $t$-structure. 
\end{proof}

\begin{remark} \label{isom-classes-fin-pres}
Using the fact that the isomorphism classes of finitely presented modules form a set, along with Theorem \ref{Teo. tCG}, we get that the collection of the {\tCG} torsion pairs actually form a set, that we  denote by $\mathbf{t}\mathcal{CG}(R)$.
\end{remark}

The simplest version of Theorem \ref{Teo. tCG} is when the set $\{T_{\lambda}\}_{\lambda \in \Lambda}$ is a singleton. In such case $\F=\text{Ker}(\text{Hom}_{R}(T_{\lambda},?))$. One way to obtain this condition over $\F$ is when $\T=\text{Gen}(T_{\lambda})$, for some finitely presented $T_{\lambda}$; for example  if $\mathcal{H}_{\mathbf{t}}$ is a module category (see \cite[Lemma 3.2]{PS3}).

\begin{corollary}
Let $R$ be a ring. Then, every torsion pair $\mathbf{t}=(\mathcal{T,F})$ in $R$-Mod such that $\mathcal{T}=\text{Gen}(V)$, for some finitely presented $R$-module $V$, is a {\tCG} torsion pair.
\end{corollary}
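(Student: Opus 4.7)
The plan is to apply Theorem \ref{Teo. tCG} directly with the singleton set $\{V\}$. Since $V$ is assumed to be finitely presented and $V \in \Gen(V) = \mathcal{T}$, it suffices to show that $\mathcal{F} = \Ker(\Hom_R(V,?))$.

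For the inclusion $\mathcal{F} \subseteq \Ker(\Hom_R(V,?))$, note that $V \in \mathcal{T} = {}^{\perp}\mathcal{F}$ (by definition of the torsion pair), so $\Hom_R(V,F) = 0$ for every $F \in \mathcal{F}$.

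For the reverse inclusion, I would take any $R$-module $M$ with $\Hom_R(V,M) = 0$ and consider the canonical torsion sequence
\[
0 \longrightarrow t(M) \longrightarrow M \longrightarrow (1:t)(M) \longrightarrow 0,
\]
with $t(M) \in \mathcal{T} = \Gen(V)$. Then there is an epimorphism $\pi : V^{(I)} \twoheadrightarrow t(M)$ for some set $I$. For each $i \in I$, composing the coproduct injection $u_i : V \hookrightarrow V^{(I)}$ with $\pi$ and with the monomorphism $t(M) \hookrightarrow M$ yields a morphism $V \to M$, which must vanish by hypothesis. Since $t(M) \hookrightarrow M$ is monic, this forces $\pi \circ u_i = 0$ for every $i$, hence $\pi = 0$. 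Because $\pi$ is epi, $t(M) = 0$, so $M \cong (1:t)(M) \in \mathcal{F}$.

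This gives $\mathcal{F} = \Ker(\Hom_R(V,?))$, and Theorem \ref{Teo. tCG} applied to the singleton $\{V\}$ finishes the proof. There is no real obstacle here; the argument is essentially a direct unwinding of the definitions, and the only slightly delicate point is the standard observation that a morphism out of a coproduct vanishes iff it vanishes on each summand, which lets one conclude $\pi = 0$ from the assumption $\Hom_R(V,M) = 0$.
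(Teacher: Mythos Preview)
Your proof is correct and follows exactly the approach the paper indicates: the paper states the corollary without a formal proof, but the paragraph immediately preceding it says precisely that when $\mathcal{T}=\Gen(V)$ one has $\mathcal{F}=\Ker(\Hom_R(V,?))$, so Theorem~\ref{Teo. tCG} applies with the singleton $\{V\}$. Your argument simply makes this explicit, and the verification that $\Ker(\Hom_R(V,?))\subseteq\mathcal{F}$ via the torsion radical is the standard one.
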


However, the reciprocal of the previous result is not true; see Remark \ref{no-tCG-fin-pres}.

\begin{corollary}\label{cor. tCG implies F}
Let $R$ be a ring and $\mathbf{t}=(\mathcal{T,F})$ be a torsion pair in $R$-Mod. If $\mathbf{t}$ is a {\tCG} torsion pair, then $\mathcal{H}_{\mathbf{t}}$ is a Grothendieck category.
\end{corollary}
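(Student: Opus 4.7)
The plan is to leverage Theorem \ref{Teo. tCG} together with the known criterion of Parra and Saor\'in that detects when the heart of an HRS $t$-structure in $R\text{-Mod}$ is a Grothendieck category.

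First I would upgrade the characterization of Theorem \ref{Teo. tCG} into a closure property of the torsion-free class. By that theorem, fix a set $\{T_{\lambda}\}_{\lambda\in\Lambda}$ of finitely presented modules in $\mathcal{T}$ with
\[
\mathcal{F} \;=\; \bigcap_{\lambda\in\Lambda}\Ker(\Hom_{R}(T_{\lambda},?)).
\]
Each functor $\Hom_{R}(T_{\lambda},?)$ commutes with direct limits because $T_{\lambda}$ is finitely presented, so every $\Ker(\Hom_{R}(T_{\lambda},?))$ is closed under direct limits in $R\text{-Mod}$. The property is preserved under intersections, so $\mathcal{F}$ itself is closed under direct limits; in other words, $\mathbf{t}$ is a torsion pair of finite type. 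This is the content of the forthcoming result (announced in the abstract) that every \tCG\ torsion pair is of finite type.

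The second step is to invoke the theorem of Parra--Saor\'in (cf.\ \cite{PS1,PS2,PS3}), which asserts that if $\mathbf{t}=(\mathcal{T},\mathcal{F})$ is a torsion pair in $R\text{-Mod}$ whose torsion-free class is closed under direct limits, then the heart $\mathcal{H}_{\mathbf{t}}$ of the associated HRS $t$-structure is a Grothendieck category. Combined with the first step, this immediately yields the corollary.

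The main obstacle, if one wishes to give a self-contained argument rather than quote the Parra--Saor\'in criterion, is the verification of AB5. A generator of $\mathcal{H}_{\mathbf{t}}$ can be produced from the compact objects $\{S_{\lambda}\}\cup\{R[1]\}$ of $\mathcal{U}_{\mathbf{t}}$ appearing in the proof of Theorem \ref{Teo. tCG} by applying $H^{0}_{\mathcal{U}_{\mathbf{t}}}$, and this is the easy part. The delicate point is to show that a direct limit in $\mathcal{H}_{\mathbf{t}}$ of a system of objects of $\mathcal{H}_{\mathbf{t}}$ agrees with the corresponding direct limit computed in $\mathcal{D}(R)$, so that exactness is inherited from $R\text{-Mod}$. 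This, in turn, reduces to checking that the degree $-1$ cohomology of such a direct system stays inside $\mathcal{F}$ after passing to the limit, and it is precisely the finite presentation of the $T_{\lambda}$ that permits $\varinjlim$ to commute through each $\Hom_{R}(T_{\lambda},?)$, securing this stability.
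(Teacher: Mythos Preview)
Your argument is correct and matches the paper's proof: you use Theorem \ref{Teo. tCG} to see that $\mathcal{F}$ is closed under direct limits (since each $T_{\lambda}$ is finitely presented), and then invoke the Parra--Saor\'in criterion \cite[Theorem 1.2]{PS2}. The additional paragraph about a self-contained verification of AB5 is extra commentary, but the core proof is exactly the paper's two-line argument.
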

\begin{proof}
Note that $\mathcal{F}$ is closed under direct limits if $\mathbf{t}$ is a {\tCG} torsion pair. The result now follows from \cite[Theorem 1.2]{PS2}.
\end{proof}

\begin{definition}
A torsion pair $\mathbf{t}=(\mathcal{T,F})$ in $R$-Mod is said to be of \emph{finite type}, if $\mathcal{F}=\varinjlim{\mathcal{F}}$.
\end{definition}

\begin{corollary}
Let $R$ be a ring and let $\mathbf{t}=(\mathcal{T,F})$ be a hereditary torsion pair in $R$-Mod. Then, $\mathbf{t}$ is a {\tCG} torsion pair if, and only if, $\mathbf{t}$ is of finite type.
\end{corollary}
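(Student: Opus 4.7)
The forward direction is essentially already contained in (the proof of) Corollary \ref{cor. tCG implies F}. If $\mathbf{t}$ is {\tCG}, Theorem \ref{Teo. tCG} provides a set $\{T_\lambda\}_{\lambda\in\Lambda}$ of finitely presented modules in $\mathcal{T}$ with $\mathcal{F}=\bigcap_{\lambda}\Ker(\Hom_{R}(T_{\lambda},?))$. Because each $T_{\lambda}$ is finitely presented, $\Hom_{R}(T_{\lambda},?)$ commutes with direct limits, so each $\Ker(\Hom_{R}(T_{\lambda},?))$ is closed under direct limits, and hence so is their intersection $\mathcal{F}$. This forward direction does not need hereditariness.

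For the converse, assume $\mathbf{t}$ is hereditary and of finite type. Let $\{T_\lambda\}_{\lambda\in\Lambda}$ be a set of representatives of the isomorphism classes of finitely presented modules in $\mathcal{T}$ (which is a set by Remark \ref{isom-classes-fin-pres}-type reasoning), and set $\mathcal{F}':=\bigcap_{\lambda}\Ker(\Hom_{R}(T_{\lambda},?))$. The inclusion $\mathcal{F}\subseteq\mathcal{F}'$ is obvious from $T_{\lambda}\in\mathcal{T}$. By Theorem \ref{Teo. tCG}, the proof will be complete once we show $\mathcal{F}'\subseteq\mathcal{F}$, and for this it suffices to establish the following key lemma: if $\mathfrak{a}\subseteq R$ is a left ideal with $R/\mathfrak{a}\in\mathcal{T}$, then there exists a finitely generated left ideal $\mathfrak{b}\subseteq\mathfrak{a}$ with $R/\mathfrak{b}\in\mathcal{T}$ (so $R/\mathfrak{b}$ is one of the $T_{\lambda}$).

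To prove the lemma, write $\mathfrak{a}=\bigcup_{i}\mathfrak{b}_{i}$ as the directed union of its finitely generated subideals. For each $i$ let $\mathfrak{c}_{i}\supseteq\mathfrak{b}_{i}$ be the left ideal with $t(R/\mathfrak{b}_{i})=\mathfrak{c}_{i}/\mathfrak{b}_{i}$; then $R/\mathfrak{c}_{i}\in\mathcal{F}$, and the assignment $i\mapsto\mathfrak{c}_{i}$ is monotone in $i$. Since $\mathbf{t}$ is of finite type, $\mathcal{F}$ is closed under direct limits, so $R/\bigcup_{i}\mathfrak{c}_{i}=\varinjlim R/\mathfrak{c}_{i}\in\mathcal{F}$. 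On the other hand, $\bigcup_{i}\mathfrak{c}_{i}\supseteq\mathfrak{a}$, so $R/\bigcup_{i}\mathfrak{c}_{i}$ is a quotient of $R/\mathfrak{a}\in\mathcal{T}$, hence lies in $\mathcal{T}$. Thus $R/\bigcup_{i}\mathfrak{c}_{i}\in\mathcal{T}\cap\mathcal{F}=0$, forcing $1\in\mathfrak{c}_{i_{0}}$ for some $i_{0}$, i.e.\ $\mathfrak{c}_{i_{0}}=R$, which means $R/\mathfrak{b}_{i_{0}}=t(R/\mathfrak{b}_{i_{0}})\in\mathcal{T}$.

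Given the lemma, the inclusion $\mathcal{F}'\subseteq\mathcal{F}$ is a one-line argument: if $M\in\mathcal{F}'$ and $t(M)\neq 0$, pick $0\neq x\in t(M)$; by hereditariness $Rx\cong R/\Ann(x)\in\mathcal{T}$, so by the lemma there is a finitely generated $\mathfrak{b}\subseteq\Ann(x)$ with $R/\mathfrak{b}\in\mathcal{T}$ finitely presented, and the map $R/\mathfrak{b}\to M$, $1+\mathfrak{b}\mapsto x$, is a nonzero morphism from some $T_{\lambda}$ into $M$, contradicting $M\in\mathcal{F}'$. I expect the main obstacle, and the only nontrivial ingredient, to be the lemma above; once this finite-type version of ``having a finitely generated basis for the Gabriel topology'' is in hand, everything else reduces to direct citations of Theorem \ref{Teo. tCG}.
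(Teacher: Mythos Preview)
Your proof is correct and follows essentially the same approach as the paper. The paper simply cites \cite[Lemma 2.4]{H} for the existence of a set of finitely presented modules in $\mathcal{T}$ whose right orthogonal is $\mathcal{F}$, and then invokes Theorem \ref{Teo. tCG}; your key lemma (together with the hereditariness reduction to cyclic modules) is precisely a self-contained proof of that cited fact, and your argument is in fact the cyclic-module specialization of the argument the paper later writes out in the proof of Corollary \ref{Cor. relax}.
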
 

\begin{proof}
Suppose that $\mathbf{t}$ is of finite type. From \cite[Lemma 2.4]{H} and its proof, there exists $\{T_{\lambda}\}_{\lambda \in \Lambda}$, a set of finitely presented $R$-modules in $\mathcal{T}$, such that $\mathcal{F}=
\{T_{\lambda} : {\lambda \in \Lambda}\}^{\perp}$.
%\{M\in R\text{-Mod} : \text{Hom}_{R}(T_\lambda,M)=0, \text{ for all } \lambda \in \Lambda \}$. 
The result now follows from Theorem \ref{Teo. tCG}.
\end{proof}

%\begin{remark}
%Corollary \ref{cor. tCG implies F} shows that every {\tCG} torsion pair is of finite type. Hence, using \cite[Lemma 4.6]{PS1}, we obtain that the collection of the {\tCG} torsion pairs actually form a set, that we  denoted by $\mathbf{t}\mathcal{CG}(R)$. 
%\end{remark}

The following corollary, relaxes the condition in Theorem \ref{Teo. tCG} when the torsion pair is of finite type.
%\newpage
\begin{corollary}\label{Cor. relax}
Let $\mathbf{t}=(\mathcal{T,F})$ be a torsion pair in $R$-Mod of finite type. The following assertions are equivalent.
\begin{enumerate}
\item \label{t-is-tCG}  $\mathbf{t}$ is a {\tCG} torsion pair. 
\item \label{Exists-fin-pres} There exists a set $\{T_{\lambda}\}_{\lambda \in \Lambda}$  of finitely presented $R$-modules in $\mathcal{T}$, such that $\mathcal{F}={\bigcap}_{\lambda \in \Lambda} \text{Ker}(\text{Hom}_{R}(T_{\lambda},?))$.
\item \label{Exists-fin-gen} There exists a set $\{T_{\lambda}\}_{\lambda \in \Lambda}$ of finitely generated $R$-modules in $\mathcal{T}$, such that $\mathcal{F}={\bigcap}_{\lambda \in \Lambda} \text{Ker}(\text{Hom}_{R}(T_{\lambda},?))$.
\end{enumerate} 
\end{corollary}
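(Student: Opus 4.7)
The equivalence (\ref{t-is-tCG}) $\Leftrightarrow$ (\ref{Exists-fin-pres}) is exactly Theorem \ref{Teo. tCG}, and (\ref{Exists-fin-pres}) $\Rightarrow$ (\ref{Exists-fin-gen}) is trivial since every finitely presented module is finitely generated. The whole content of the corollary therefore lies in (\ref{Exists-fin-gen}) $\Rightarrow$ (\ref{Exists-fin-pres}), which is where the finite-type hypothesis must enter. My plan is to reduce this to the following lemma: under the finite-type assumption, every finitely generated $T \in \T$ admits a surjection $P \twoheadrightarrow T$ with $P \in \T$ finitely presented. Granted the lemma, I can replace each $T_\lambda$ in (\ref{Exists-fin-gen}) by a corresponding $P_\lambda$: applying $\Hom_R(-,M)$ to $P_\lambda \twoheadrightarrow T_\lambda$ yields an inclusion $\Hom_R(T_\lambda,M)\hookrightarrow\Hom_R(P_\lambda,M)$, so $\bigcap_\lambda \Ker\Hom_R(P_\lambda,?) \subseteq \bigcap_\lambda \Ker\Hom_R(T_\lambda,?)=\F$, while the reverse inclusion is automatic because each $P_\lambda \in \T$. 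Hence $\{P_\lambda\}$ witnesses (\ref{Exists-fin-pres}).

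To prove the lemma, I would choose any presentation $T=R^m/L$ and write $L=\bigcup_j L_j$ as the directed union of its finitely generated submodules, so that $T=\varinjlim_j R^m/L_j$ with each $R^m/L_j$ finitely presented. The torsion coradical $(1:t)$ is left adjoint to the inclusion $\F\hookrightarrow R\text{-Mod}$, hence preserves direct limits and epimorphisms. From $T\in\T$ we have $(1:t)(T)=0$, so $\varinjlim_j (1:t)(R^m/L_j)=0$. Fix any $j_0$; the module $(1:t)(R^m/L_{j_0})$ is finitely generated, being a quotient of a finitely generated module, say by elements $\bar x_1,\dots,\bar x_k$. Each $\bar x_i$ represents the zero element of the direct limit, hence vanishes at some stage $j_i\geq j_0$. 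By directedness I can pick $j'\geq j_i$ for every $i$; then the transition map $(1:t)(R^m/L_{j_0})\to(1:t)(R^m/L_{j'})$ kills every generator. Since this map is surjective (as $(1:t)$ preserves epimorphisms), one concludes $(1:t)(R^m/L_{j'})=0$, i.e.\ $P:=R^m/L_{j'}\in\T$; this $P$ is finitely presented and the canonical map $P\twoheadrightarrow T$ is the desired surjection.

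The main delicate point in this strategy is that the naive finitely presented approximations $R^m/L_j$ of $T$ typically fall outside $\T$. The finite-type hypothesis intervenes exactly here: it guarantees that the colimit of the torsion-free defects $(1:t)(R^m/L_j)$ vanishes, which combined with the finite generation of any single such defect lets one eliminate it at a sufficiently large stage. Without finite type there is no mechanism to force some finite-stage approximation into $\T$, and the argument breaks.
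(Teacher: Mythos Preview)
Your argument is correct and follows essentially the same route as the paper's: both write $T_\lambda=\varinjlim R^{m}/K_i$ with surjective transition maps, use the finite-type hypothesis to get $\varinjlim (1{:}t)(R^{m}/K_i)=0$, and then use finite generation to find a stage at which $R^{m}/K_i\in\T$. The only cosmetic differences are that the paper records this via a pullback diagram introducing submodules $J_i\subseteq R^{m}$ with $R^{m}/J_i\cong(1{:}t)(R^{m}/K_i)$, and keeps the whole cofinal family $\{R^{m}/K_i:i\ge k\}$ as its new set, whereas you more economically keep a single $P_\lambda$ and use the surjection $P_\lambda\twoheadrightarrow T_\lambda$ to compare the $\Hom$-kernels.
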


\begin{proof}
Theorem \ref{Teo. tCG} says that \eqref{t-is-tCG} $\iff$ \eqref{Exists-fin-pres},  and since that \eqref{Exists-fin-pres} $\implies$ \eqref{Exists-fin-gen} is trivial, we just need to prove \eqref{Exists-fin-gen} $\implies$ \eqref{Exists-fin-pres}. This last implication follows directly from the proof of \cite[Lemma 2.4]{H}, nevertheless, for clarity, we include some details of the proof. Let $\{T_{\lambda}\}_{\lambda \in \Lambda}$ be a set of finitely generated $R$-modules in $\T$, such that $\mathcal{F}={\bigcap}_{\lambda \in \Lambda} \text{Ker}(\text{Hom}_{R}(T_{\lambda},?))$. For each $\lambda$, we consider the following exact sequence in $R$-Mod:
\[
{0 \longrightarrow  K \longrightarrow R^{(n_{\lambda})} \longrightarrow T_{\lambda} \longrightarrow 0},
\]
where $n_{\lambda}$ is a natural number. Now, we fix a direct system $(K_i)_{i\in I}$ of finitely generated submodules of $K$ such that $K={\bigcup}_{i \in I} K_i .$ Note that for each $i$, we have the following commutative diagram:
\[
\begin{tikzpicture}[node distance=1.5cm]
\node (0ul) {$0$};
\node (Ki) [right of=0ul] {$K_i$};
\node (Rnu) [right of=Ki] {$R^{(n_{\lambda})}$};
\node (Rn/Ki) at ($(Rnu)+(1.85,0)$) {$R^{(n_{\lambda})}/K_i$};
\node (0ur) [right of=Rn/Ki] {$0$};
\node (0dl) [below of=0ul] {$0$};
\node (K) [below of=Ki] {$K$};
\node (Rnd) [below of=Rnu] {$R^{(n_{\lambda})}$};
\node (T) [below of=Rn/Ki] {$T_{\lambda}$};
\node (0dr) [below of=0ur] {$0$.};
\draw[->] (0ul) -- (Ki);
\draw[->] (Ki) -- (Rnu);
\draw[->] (Rnu) -- (Rn/Ki);
\draw[->] (Rn/Ki) -- (0ur);
\draw[->] (0dl) -- (K);
\draw[->] (K) -- (Rnd);
\draw[->] (Rnd) -- (T);
\draw[->] (T) -- (0dr);
\draw[right hook->] (Ki) -- node [right] {\scriptsize $\iota_i$} (K);
\draw[double equal sign distance] (Rnu) -- (Rnd);
\draw[->>] (Rn/Ki) -- (T);
\end{tikzpicture}
\]
It follows that $T_{\lambda}\cong \varinjlim {R^{(n_{\lambda})}}/{K_i}$, where all morphisms of this direct system are projections. Since $\F=\varinjlim \F$, we obtain that $T_{\lambda}=\varinjlim t({R^{(n_{\lambda})}}/{K_i})$. 

Next, for each $i\in I$, we consider the following commutative diagram: 
\[
\begin{tikzpicture}[node distance=1.5cm]
\node (0ul) {$0$};
\node (Kiu) [right of=0ul] {$K_i$};
\node (J) [right of=Ki] {$J_i$};
\node (t) at ($(J)+(2.75,0)$) {$t(R^{(n_{\lambda})}/K_i)$};
\node (0ur) [right of=t] {$0$};
\node (0dl) [below of=0ul] {$0$};
\node (Kid) [right of=0dl] {$K_i$};
\node (Rnd) [below of=J] {$R^{(n_{\lambda})}$};
\node (R/K) [below of=t] {$R^{(n_{\lambda})}/K_i$};
\node (0dr) [right of=R/K] {$0$};
\node (R/J) [below of=Rnd] {$R^{(n_{\lambda})}/J_i$};
\node (1t) [below of=R/K] {$(1:t)(R^{(n_{\lambda})}/J_i)$.};
\draw[->] (0ul) -- (Kiu);
\draw[->] (Kiu) -- (J);
\draw[->] (J) -- (t);
\draw[->] (t) -- (0ur);
\draw[->] (0dl) -- (Kid);
\draw[->] (Kid) -- (Rnd);
\draw[->] (Rnd) -- (R/K);
\draw[->] (R/K) -- (0dr);
\draw[right hook->] (J) --  (Rnd);
\draw[double equal sign distance] (Kiu) -- (Kid);
\draw[->>] (R/K) -- (1t);
\draw[right hook->] (t) -- (R/K);
\draw[->>] (Rnd) -- (R/J);
\draw[->] (R/J) -- node[above]{$\sim$} (1t);
\node at (3.5,-0.45) {$\ulcorner$};
\end{tikzpicture}
\]
Where the top right square of this diagram is the pullback square obtained from ${R^{(n_{\lambda})} \rightarrow {R^{(n_{\lambda})}}/{K_i}}$ and ${t({R^{(n_{\lambda})}}/{K_i}) \hookrightarrow {R^{(n_{\lambda})}}/{K_i}}$.

Given that $\varinjlim {R^{(n_{\lambda})}}/{J_i}\cong \varinjlim (1:t)({R^{(n_{\lambda})}}/{K_i}) =0$, it follows that $(J_i)_{i\in I}$ is a directed union such that ${\bigcup}_{i \in I} J_i=R^{(n_{\lambda})}$. Since $R^{(n_{\lambda})}$ is a finitely generated $R$-module, there is $k\in I$ such that $J_{i}=R^{(n_{\lambda})}$, for all $i\geq k$. Hence ${R^{(n_{\lambda})}}/{K_i}$ is a finitely presented $R$-module which is in $\T$, for all $i\geq k$. If we let $\mathcal{S}_{\lambda}:=\{{R^{(n_{\lambda})}}/{K_i}: i\geq k\}$, then we obtain that $\F={\bigcap}_{S\in \bigcup \mathcal{S}_{\lambda}} \text{Ker}(\text{Hom}_{R}(S,?))$.
\end{proof}

\begin{theorem}\label{Teo tCG noetherian}
Let $R$ be a left Noetherian ring and let $\mathbf{t}=(\mathcal{T,F})$ be a torsion pair in $R$-Mod. The following assertions are equivalent:
\begin{enumerate}
\item \label{t-is-tCG-2} $\mathbf{t}$ is a {\tCG} torsion pair.
\item \label{t-fin-type} $\mathbf{t}$ is of finite type.
\item \label{Ht-Grot} $\mathcal{H}_{\mathbf{t}}$ is a Grothendieck category.
\end{enumerate}
\end{theorem}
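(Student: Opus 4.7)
The plan is to establish the three equivalences by chaining the implications, most of which reduce to results already at hand. First, $(1) \Rightarrow (3)$ is exactly Corollary~\ref{cor. tCG implies F}, valid over any ring. Next, $(1) \Rightarrow (2)$ is an immediate consequence of Theorem~\ref{Teo. tCG}: writing $\F = \bigcap_{\lambda} \Ker(\Hom_R(T_\lambda, ?))$ with each $T_\lambda$ finitely presented, each functor $\Hom_R(T_\lambda, ?)$ commutes with direct limits, so $\F$ is closed under direct limits. For $(3) \Rightarrow (2)$, I would invoke \cite[Theorem 1.2]{PS2} (already used in the proof of Corollary~\ref{cor. tCG implies F}), which provides the biconditional between $\mathcal{H}_{\mathbf{t}}$ being a Grothendieck category and $\F$ being closed under direct limits. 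This reduces the whole theorem to proving $(2) \Rightarrow (1)$, which is where the left Noetherian hypothesis actually plays a role.

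For $(2) \Rightarrow (1)$, my strategy is to show that $\F = (\T^{\mathrm{fg}})^{\perp}$, where $\T^{\mathrm{fg}}$ denotes the class of finitely generated $R$-modules lying in $\T$. Since $R$ is left Noetherian, finitely generated equals finitely presented, and the isomorphism classes in $\T^{\mathrm{fg}}$ form a set; hence Corollary~\ref{Cor. relax} immediately yields that $\mathbf{t}$ is {\tCG}. The inclusion $\F \subseteq (\T^{\mathrm{fg}})^{\perp}$ is clear from the torsion-pair orthogonality. For the reverse, I would first argue that, because $\F$ is closed under direct limits and $\T$ always is, the torsion radical $t$ commutes with direct limits: apply $\varinjlim$ to the canonical sequences $0 \to t(A_i) \to A_i \to (1:t)(A_i) \to 0$ and use uniqueness of the torsion decomposition in the limit. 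Given $X \in (\T^{\mathrm{fg}})^{\perp}$, write $t(X) = \bigcup_{i \in I} N_i$ as a directed union of finitely generated submodules; by Noetherianity each $t(N_i) \subseteq N_i$ is again finitely generated, hence lies in $\T^{\mathrm{fg}}$. The inclusion $t(N_i) \hookrightarrow t(X) \subseteq X$ then belongs to $\Hom_R(t(N_i), X) = 0$, forcing $t(N_i) = 0$; taking direct limits gives $t(X) = \varinjlim t(N_i) = 0$, i.e.\ $X \in \F$.

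The main obstacle is that, since $\mathbf{t}$ is not assumed hereditary, a finitely generated submodule of an object of $\T$ need not itself lie in $\T$, so one cannot directly test against the $N_i$. The remedy---passing to the torsion subobject $t(N_i)$---is precisely what forces the use of Noetherianity: without it one could not guarantee that $t(N_i)$ is finitely generated, so it might not belong to $\T^{\mathrm{fg}}$, and the whole argument would collapse. This is consistent with the counterexample promised in Section~\ref{S:Construction}, where finite type fails to imply {\tCG} in the absence of the Noetherian hypothesis.
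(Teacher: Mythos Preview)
Your proof is correct. The reductions $(1)\Rightarrow(3)$ and $(3)\Rightarrow(2)$ match the paper's exactly (Corollary~\ref{cor. tCG implies F} and \cite[Theorem~1.2]{PS2}). For $(2)\Rightarrow(1)$ the paper takes a slightly different route: it first invokes \cite[Lemma~4.6]{PS1} to write $\mathcal{T}=\Gen(V)$ for a single module $V$, then expresses $V$ as a directed union of finitely generated submodules $V_\lambda$ and shows, using that $t$ commutes with direct limits (exactly as you argue), that $\F=\bigcap_\lambda \Ker(\Hom_R(t(V_\lambda),?))$, with each $t(V_\lambda)$ finitely presented by Noetherianity. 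Your argument bypasses the external lemma by working with all of $\T^{\mathrm{fg}}$ at once and testing directly against $t(X)$; this makes it a bit more self-contained and avoids the dependence on \cite{PS1}. Both approaches rest on the same two ingredients: that $t$ preserves direct limits once $\F$ is closed under them, and that left Noetherianity forces the torsion subobject of a finitely generated module to remain finitely generated (hence finitely presented).
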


\begin{proof}
By \cite[Theorem 1.2]{PS2} and Corollary \ref{cor. tCG implies F}, we just need to prove \eqref{t-fin-type} $\implies$ \eqref{t-is-tCG-2}. From \cite[Lemma 4.6]{PS1}, we can assume that $\mathcal{T}=\text{Gen}(V)$, for some $R$-module $V$. Now, we fix a direct system $(V_{\lambda})_{\lambda \in \Lambda}$ of finitely generated submodules of $V$, such that $\varinjlim V_{\lambda}=V$. By hypothesis, $\F=\varinjlim \F$, and so we get that $\varinjlim t(V_{\lambda})\cong V$. Furthermore, each $t(V_{\lambda})$ is a finitely presented $R$-module, since $R$ is a left Noetherian ring. Note that $\mathcal{F}=\text{Ker}(\text{Hom}_{R}(V,?))=\text{Ker}(\text{Hom}_{R}(\varinjlim{t(V_{\lambda})},?))$. Therefore $\mathcal{F}={\bigcap}_{\lambda \in \Lambda} \text{Ker}(\text{Hom}_{R}(t(V_{\lambda}),?))$, and hence the result follows from Theorem \ref{Teo. tCG}.
\end{proof}

The next result is a surprising result to the authors that can be obtained as an immediate corollary of Theorem \ref{Teo tCG noetherian} and \cite[Theorem 3.10]{AJS}. However, this  result is also given in \cite[Lemma 4.2]{AH}, but from a different point of view, namely the theory of the \emph{silting modules}.

\begin{corollary}
Let $R$ be a commutative Noetherian ring and $\mathbf{t}=(\mathcal{T,F})$ be a torsion pair in $R\text{-Mod}$. Then $\mathbf{t}=(\mathcal{T,F})$ is a hereditary torsion pair if, and only if, $\mathcal{F}$ is closed under direct limits.
\end{corollary}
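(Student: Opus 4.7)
My plan is to obtain this result as a direct corollary by chaining together Theorem \ref{Teo tCG noetherian} with Example \ref{ejemplo-sp}; the commutative Noetherian hypothesis on $R$ is strong enough to invoke both.

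For the forward implication, I would begin by assuming that $\mathbf{t}=(\T,\F)$ is hereditary. Example \ref{ejemplo-sp} immediately gives that $\mathbf{t}$ is a $t$CG torsion pair, so Theorem \ref{Teo. tCG} supplies a family $\{T_\lambda\}$ of finitely presented modules in $\T$ with $\F = \bigcap_\lambda \Ker(\Hom_R(T_\lambda, ?))$. Since $T_\lambda$ is finitely presented, $\Hom_R(T_\lambda, ?)$ commutes with direct limits, and therefore each $\Ker(\Hom_R(T_\lambda,?))$ is closed under direct limits; the intersection $\F$ inherits this property. This is exactly the observation used at the opening of the proof of Corollary \ref{cor. tCG implies F}, so I can even cite it directly rather than reargue.

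For the reverse implication, suppose $\F$ is closed under direct limits, i.e.\ $\mathbf{t}$ is of finite type in the sense of the definition preceding the statement. Theorem \ref{Teo tCG noetherian}, applicable since $R$ is (left) Noetherian, then gives that $\mathbf{t}$ is a $t$CG torsion pair. The converse direction of Example \ref{ejemplo-sp}, which identifies $t$CG torsion pairs over a commutative Noetherian ring with hereditary torsion pairs (via the sp-filtration $\phi$ with $\phi(0)=Z$, $\phi(k)=\emptyset$ for $k>0$, and $\phi(k)=\text{Spec}(R)$ for $k<0$), then lets me conclude that $\mathbf{t}$ is hereditary.

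There is no genuine obstacle here: the content is entirely bookkeeping once Theorem \ref{Teo tCG noetherian} and Example \ref{ejemplo-sp} are in hand. The only small point to verify is that the commutative Noetherian hypothesis covers both the commutative Noetherian input required by Example \ref{ejemplo-sp} and the left Noetherian input required by Theorem \ref{Teo tCG noetherian}, which is immediate. This is presumably why the authors describe the statement as a clean and somewhat surprising drop-out of their machinery, despite its also being obtainable through the silting-module perspective of \cite[Lemma 4.2]{AH}.
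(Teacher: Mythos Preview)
Your proposal is correct and follows essentially the same route as the paper: the authors state that the result ``follows from Theorem \ref{Teo tCG noetherian} and Example \ref{ejemplo-sp}'', which is exactly the chain you describe. The only minor difference is that, for the forward implication, the paper also offers (``for the benefit of the reader'') a more direct argument via the Gabriel filter---writing $\F = \bigcap_{\mathfrak{a}\in F_g}\Ker(\Hom_R(R/\mathfrak{a},?))$ and using that each $R/\mathfrak{a}$ is finitely presented over a Noetherian ring---rather than passing through Example \ref{ejemplo-sp} and Theorem \ref{Teo. tCG}; but this is a cosmetic variation, not a different strategy.
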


\begin{proof}
This follows from Theorem \ref{Teo tCG noetherian} and Example \ref{ejemplo-sp}. Nevertheless, for the benefit of the reader, we make one of the implications explicit.

If $\mathbf{t}=(\T,\F)$ is a hereditary torsion pair in $R$-Mod, then we have that $\F={\bigcap}_{\mathfrak{a}\in F_g} \text{Ker}(\text{Hom}_{R}({R}/{\mathfrak{a}},?))$, where $F_g$ is the Gabriel filter associated to the torsion pair $\mathbf{t}$ (see \cite{S}). Since $R$ is a Noetherian ring, then ${R}/{\mathfrak{a}}$ is finitely presented as an $R$-module and so $\F$ is closed under direct limits. 

%If $\mathbf{t}=(\T,\F)$ is a hereditary torsion pair in $R$-Mod, then we have that $\F={\bigcap}_{\mathfrak{a}\in F_g} \text{Ker}(\text{Hom}_{R}({R}/{\mathfrak{a}},?))$, where $F_g$ is the Gabriel filter associated to the torsion pair $\mathbf{t}$ (see \cite{S}). Since $R$ is a Noetherian ring, then ${R}/{\mathfrak{a}}$ is finitely presented as an $R$-module and so $\F$ is closed under direct limits. 
%The other implication follows from Theorem \ref{Teo tCG noetherian} and Example \ref{ejemplo-sp}.

\end{proof}

Recall that a ring $R$ is called \emph{left coherent ring} if each finitely generated left ideal of $R$ is a finitely presented $R$-module. It is a well known fact that if $R$ is a left coherent ring, then the subcategory $fp(R\text{-Mod})$ of finitely presented $R$-modules, is an abelian category. Again, using the fact that isomorphism classes of finitely presented modules form a set, we get that all torsion pairs in $fp(R\text{-Mod})$ form a set, which we will denote by $\mathbf{t}(fp(R))$.

The following result shows us a way to get a {\tCG} torsion pair, for left coherent rings, from a torsion pair in $\mathbf{t}(fp(R))$. However there are {\tCG} torsion pairs that cannot be obtained in this way. First we recall that a ring $R$ is called \emph{von Neumann regular ring} (VNR), if for every $a$ in $R$, there exists $x$ in $R$ such that $a=axa$. It is well known that over  a VNR ring every finitely presented $R$-module is a projective $R$-module, hence every VNR ring is also coherent.

\begin{theorem}\label{Teo. Coherent}
Let $R$ be a left coherent ring. The assignment $\mathbf{t}=(\mathcal{X,Y}) \leadsto \tilde{\mathbf{t}}:= (\varinjlim{\mathcal{X}},\varinjlim{\mathcal{Y}})$ defines an injective function ${ \phi: \mathbf{t}(fp(R)) \longrightarrow  \mathbf{t}\mathcal{CG}(R).}$ 

Moreover, the following assertions hold:
\begin{enumerate}
\item \label{phi-bij} If $R$ is a Noetherian ring, then $\phi$ is a bijective function.
\item \label{phi-not-bij} Let $R:={\prod}_{\mathbb{N}} {\mathbb{Z}}/{2\mathbb{Z}}$ with addition and multiplication defined componentwise. For this coherent ring, $\phi$ is not bijective.
\end{enumerate}
\end{theorem}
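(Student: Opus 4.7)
My plan is to establish, in sequence, well-definedness of $\phi$, its injectivity, surjectivity in the Noetherian case, and failure of surjectivity for the specific von Neumann regular example. For well-definedness, given $(\mathcal{X},\mathcal{Y})\in\mathbf{t}(fp(R))$ I would let $\{T_\lambda\}_{\lambda\in\Lambda}$ be a set of representatives of the isomorphism classes of $\mathcal{X}$ and set $\mathcal{F}':=\bigcap_\lambda\Ker(\Hom_R(T_\lambda,?))$. Standard closure properties (subobjects, products, extensions) make $\mathcal{F}'$ a torsion-free class, giving a torsion pair $(\mathcal{T}',\mathcal{F}'):=({}^\perp\mathcal{F}',\mathcal{F}')$; this pair is $t$CG by Theorem \ref{Teo. tCG}. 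I then plan to identify $(\mathcal{T}',\mathcal{F}')$ with $(\varinjlim\mathcal{X},\varinjlim\mathcal{Y})$. The inclusion $\varinjlim\mathcal{Y}\subseteq\mathcal{F}'$ is immediate from $\Hom_R(T_\lambda,-)$ commuting with direct limits in the covariant variable. For the reverse inclusion, given $M\in\mathcal{F}'$ I would write $M=\varinjlim M_i$ with $M_i\in fp(R\text{-Mod})$, apply the torsion decomposition $0\to t(M_i)\to M_i\to(1:t)(M_i)\to 0$ inside $fp(R\text{-Mod})$ (available because coherence makes $fp(R\text{-Mod})$ abelian), and exploit that $t(M_i)\in\mathcal{X}$ forces the composite $t(M_i)\to M_i\to M$ to be zero. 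A standard colimit argument then shows $\varinjlim t(M_i)$ dies in $M$, whence $M\cong\varinjlim(1:t)(M_i)\in\varinjlim\mathcal{Y}$. The equality $\mathcal{T}'=\varinjlim\mathcal{X}$ then follows by uniqueness of torsion decompositions, since the inclusion $\varinjlim\mathcal{X}\subseteq\mathcal{T}'$ is clear from orthogonality.

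For injectivity, I would recover $(\mathcal{X},\mathcal{Y})$ from $\phi(\mathcal{X},\mathcal{Y})=(\tilde{\mathcal{X}},\tilde{\mathcal{Y}})$ by intersection with $fp(R\text{-Mod})$. The classical retract argument applies: if $Y\in fp(R\text{-Mod})$ and $Y\cong\varinjlim Y_j$ with $Y_j\in\mathcal{Y}$, then $\mathrm{id}_Y$ factors through some $Y_k$, exhibiting $Y$ as a retract and therefore a subobject of $Y_k$ in $fp(R\text{-Mod})$; closure of $\mathcal{Y}$ under subobjects forces $Y\in\mathcal{Y}$. The dual argument using closure of $\mathcal{X}$ under quotients gives $\mathcal{X}=\tilde{\mathcal{X}}\cap fp(R\text{-Mod})$, proving injectivity of $\phi$.

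For (1), let $\tilde{\mathbf{t}}=(\mathcal{T},\mathcal{F})\in\mathbf{t}\mathcal{CG}(R)$ with $R$ left Noetherian. I put $\mathcal{X}:=\mathcal{T}\cap fp(R\text{-Mod})$ and $\mathcal{Y}:=\mathcal{F}\cap fp(R\text{-Mod})$. Noetherianity makes $fp(R\text{-Mod})$ coincide with the category of finitely generated modules and be closed under both subobjects and quotients, so the torsion decomposition of any finitely presented module stays inside $fp(R\text{-Mod})$, and $(\mathcal{X},\mathcal{Y})$ is a torsion pair there. By Theorem \ref{Teo tCG noetherian}, $\tilde{\mathbf{t}}$ is of finite type, hence $\mathcal{F}$ is closed under direct limits; combined with the fact that every $M\in\mathcal{F}$ is the directed union of its finitely generated (equivalently, finitely presented) submodules, all in $\mathcal{Y}$ by subobject-closure of $\mathcal{F}$, this yields $\mathcal{F}=\varinjlim\mathcal{Y}$, and the well-definedness step then forces $\phi(\mathcal{X},\mathcal{Y})=\tilde{\mathbf{t}}$.

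For (2), I take $R=\prod_\mathbb{N}\mathbb{Z}/2\mathbb{Z}$ and consider $\mathfrak{a}:=\bigoplus_\mathbb{N}\mathbb{Z}/2\mathbb{Z}\subseteq R$, the non-finitely-generated ideal of finite-support functions. Since $R$ is Boolean every ideal is idempotent, so the Stenstr\"{o}m characterization recalled in Section \ref{S:Prelim} yields a TTF-triple $(\mathcal{C}_\mathfrak{a},\mathcal{T}_\mathfrak{a},\mathcal{F}_\mathfrak{a})$; the result promised in Section \ref{S:Construction} places the left constituent $(\mathcal{C}_\mathfrak{a},\mathcal{T}_\mathfrak{a})$ inside $\mathbf{t}\mathcal{CG}(R)$. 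Over this $R$ every finitely presented module is projective, hence a direct summand of some $R^n$ of the form $\bigoplus_i Re_{A_i}$, and such a module lies in $\mathcal{T}_\mathfrak{a}$ iff every idempotent in $\mathfrak{a}$ annihilates it, which forces it to be zero. Thus $\mathcal{T}_\mathfrak{a}\cap fp(R\text{-Mod})=\{0\}$, so by the injectivity recovery any preimage of $(\mathcal{C}_\mathfrak{a},\mathcal{T}_\mathfrak{a})$ under $\phi$ would have torsion-free part $\{0\}$, forcing $\mathcal{X}=fp(R\text{-Mod})$; but $R\in fp(R\text{-Mod})$ does not belong to $\mathcal{C}_\mathfrak{a}$ since $\mathfrak{a}R=\mathfrak{a}\neq R$, a contradiction. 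The main obstacle throughout is the first identification $\mathcal{F}'=\varinjlim\mathcal{Y}$ in the well-definedness step, which is where left coherence of $R$ plays its decisive role and which then drives both the injectivity formulas and the Noetherian surjectivity argument.
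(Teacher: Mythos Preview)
Your argument is correct and follows the same overall architecture as the paper (well-definedness, injectivity, Noetherian surjectivity, the explicit counterexample), but several of the tactical steps differ.

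For well-definedness, the paper simply invokes Crawley--Boevey's lemma \cite[Lemma~4.4]{C} to identify the torsion pair generated by $\mathcal{X}$ with $(\varinjlim\mathcal{X},\varinjlim\mathcal{Y})$, whereas you reprove this by hand using the torsion decomposition in $fp(R\text{-Mod})$ together with exactness of filtered colimits. Your route is more self-contained; the paper's is shorter. For injectivity, the paper argues via the torsion decomposition (showing $(1:t)(X_1)\in\mathcal{X}_1\cap\mathcal{Y}=0$), while you use the standard retract trick for finitely presented objects; both recover $(\mathcal{X},\mathcal{Y})$ as the intersection with $fp(R\text{-Mod})$. The Noetherian case is essentially identical in both.

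For part~(2) there are two differences worth flagging. First, to see that $(\mathcal{C}_{\mathfrak{a}},\mathcal{T}_{\mathfrak{a}})$ is $t$CG you appeal to the later Section~\ref{S:Construction} result on VNR rings; the paper instead proves this on the spot by writing $\mathfrak{a}=\bigcup_n I_n$ with $I_n$ finitely presented and in $\mathcal{C}_{\mathfrak{a}}$, then applying Theorem~\ref{Teo. tCG}. Your forward reference is not circular (Theorem~\ref{Teo. Von} does not use Theorem~\ref{Teo. Coherent}), but it does invert the logical order of the exposition. Second, your contradiction comes from computing $\mathcal{T}_{\mathfrak{a}}\cap fp(R\text{-Mod})=0$, whereas the paper instead decomposes $R$ inside $fp(R\text{-Mod})$ and observes that this would force $t(R)=\mathfrak{a}$ to be finitely presented. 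Both obstructions are valid; yours leans on the structure of projectives over this Boolean ring, the paper's is a one-line appeal to the non-finite-generation of $\mathfrak{a}$. (Your parenthetical that every finitely presented module has the form $\bigoplus_i Re_{A_i}$ is not needed: once $P$ is a summand of $R^n$ and $e_kP=0$ for all $k$, one gets $P=0$ directly.)
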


\begin{proof}
Let $\mathbf{t}=(\mathcal{X,Y})$ be a torsion pair in $fp(R\text{-Mod})$. By \cite[Lemma 4.4]{C}, we know that the torsion pair in $R$-Mod generated by $\mathcal{X}$ is $\tilde{\mathbf{t}}=(\mathcal{T,F})=(\varinjlim \mathcal{X}, \varinjlim \mathcal{Y})$. In other words,  $\mathcal{F}=\varinjlim \mathcal{Y}$ consists of the $R$-modules $F$ such that $\text{Hom}_{R}(X,F)=0$, for all $X\in \mathcal{X}$. Thus, $\F={\bigcap}_{X \in \mathcal{X}} \text{Ker}(\text{Hom}_{R}(X,?))$ and therefore
%since the class of isomorphism classes of finitely presented modules form a set,  we obtain that 
$\tilde{\mathbf{t}}=(\mathcal{T,F})=(\varinjlim \mathcal{X}, \varinjlim \mathcal{Y})$ is a {\tCG} torsion pair.
%Hence $\mathcal{F}$ is closed under direct limits, and therefore there exist $V$ an $R$-module such that $\mathcal{T}=\varinjlim{ \mathcal{X}}=\text{Gen}(V)$, see \cite[Lemma 4.6]{PS1}. Now, we take a direct system $(X_{\lambda})_{\lambda \in \Lambda}$ in $\mathcal{X}$ such that $V=\varinjlim {X_{\lambda}}$. Since, $\mathcal{F}=\text{Ker}(\text{Hom}_{R}(V,?))=\text{Ker}(\text{Hom}_{R}(\varinjlim {X_\lambda},?))$, we get that $\mathcal{F}={\bigcap}_{\lambda \in \Lambda} \text{Ker}(\text{Hom}_{R}(X_{\lambda},?))$. Thus, by Theorem \ref{Teo. tCG} we obtain that $\tilde{\mathbf{t}}=(\mathcal{T,F})=(\varinjlim \mathcal{X}, \varinjlim \mathcal{Y})$ is a {\tCG} torsion pair.

On the other hand, if $\mathbf{t}_1=(\mathcal{X}_1,\mathcal{Y}_1)$ is a torsion pair in $fp(R\text{-Mod})$ such that $\tilde{\mathbf{t}_1}=(\varinjlim \mathcal{X}_1, \varinjlim \mathcal{Y}_1)=(\varinjlim \mathcal{X}, \varinjlim \mathcal{Y})=\tilde{\mathbf{t}}$. Then, for each $X_1\in \mathcal{X}_1$, we have that $(1:t)(X_1)\in \mathcal{X}_1 \cap \mathcal{Y}\subseteq \varinjlim \mathcal{X}_1\cap \varinjlim\mathcal{Y}=\varinjlim \mathcal{X}\cap \varinjlim \mathcal{Y}=0$, hence $X_1\in \mathcal{X}$ and therefore $\mathcal{X}_1\subseteq \mathcal{X}$. By a similar argument we obtain that $\mathcal{X}\subseteq \mathcal{X}_1$, so that $\mathbf{t}=\mathbf{t}_1$. 
%This show that the assignment $\mathbf{t} \leadsto \tilde{\mathbf{t}}$ is one-to-one, and since $\mathbf{t}\mathcal{CG}(R)$ is a set, it follows that $\mathbf{t}(fp(R))$ is also a set. Hence the assignment $\mathbf{t} \leadsto \tilde{\mathbf{t}}$ is a function hat is injective. 
This shows that $\phi$ is an injective function, thus completing the first part of the theorem.

We now suppose that $R$ is a Noetherian ring. Let $\mathbf{t}=(\mathcal{T,F})$ be a {\tCG} torsion pair in $R$-Mod. We will show that there is $\mathbf{t}_{1}\in \mathbf{t}(fp(R))$, such that $\phi(\mathbf{t}_1)=\mathbf{t}$. If $P\in fp(R\text{-Mod})$, then we have the following exact sequence in $R$-Mod:
\[
0 \longrightarrow t(P) \longrightarrow P \longrightarrow (1:t)(P) \longrightarrow 0.
\]

Note that $t(P)$ is a finitely generated $R$-module, which in this case is also a finitely presented $R$-module.
Therefore, the previous exact sequence is also in $fp(R\text{-Mod})$. 
This shows that $\mathbf{t}_1:=(\mathcal{T}\cap fp(R\text{-Mod}), \mathcal{F} \cap fp(R\text{-Mod}))$ is a torsion pair in $fp(R\text{-Mod})$. 
Since $\mathcal{F}$ is closed under direct limits, we obtain that $\varinjlim (\mathcal{T}\cap fp(R\text{-Mod}))\subseteq \mathcal{T}$ and that $\varinjlim( \mathcal{F}\cap fp(R\text{-Mod}))\subseteq \mathcal{F}$, and hence $\phi(\mathbf{t}_1)=\mathbf{t}$.

For the final statement of the theorem, let $R:={\prod}_{{\mathbb{N}}} {\mathbb{Z}}/{2\mathbb{Z}}$ and let $\mathfrak{a}:={\bigoplus}_{\mathbb{N}} {\mathbb{Z}}/{2\mathbb{Z}}$. It is clear that $\mathfrak{a}$ is a two-sided  idempotent  ideal of $R$ and we consider the TTF triple $(\C_\mathfrak{a},\T_\mathfrak{a},\F_\mathfrak{a})$ in $R$-Mod associated to the ideal $\mathfrak{a}$.  Next, for each $n\in \mathbb{N}$, we consider  ${{\bigoplus}_{j=1}^n}  {\mathbb{Z}}/{2\mathbb{Z}} \xrightarrow{\iota_n} {\bigoplus}_{\mathbb{N}} {\mathbb{Z}}/{2\mathbb{Z}}$, the canonical inclusion, and let $I_n:=\text{Im}(\iota_n)$. Note that $\mathfrak{a}$ is the directed union of the $I_n$, and that every $I_n$ is a finitely presented $R$-module such that $\mathfrak{a}I_n=I_n$. 

Since $\mathcal{T_\mathfrak{a}}=\text{Ker}(\text{Hom}_{R}(\mathfrak{a},?))=\text{Ker}(\text{Hom}_{R}({\bigcup}_{n \in \mathbb{N}} I_n,?))$ and that each $I_n\in \mathcal{C_\mathfrak{a}}$, it follows that $\mathcal{T_\mathfrak{a}}={\bigcap}_{n \in \mathbb{N}} \text{Ker}(\text{Hom}_{R}(I_n,?))$. By Theorem \ref{Teo. tCG}, we obtain that this torsion pair is a {\tCG} torsion pair.

%
%On the other hand, note that the torsion pair $(\C_\mathfrak{a},\T_\mathfrak{a})$ is of finite type, then by 
%Corollary \ref{Cor. relax}, we obtain that this torsion pair is a {\tCG} torsion pair. 

We claim that $(\C_\mathfrak{a},\T_\mathfrak{a}) \notin \text{Im}(\phi)$. Indeed, suppose that there exists $\mathbf{t}=(\mathcal{X,Y})$ a torsion pair in $fp(R\text{-Mod})$ such that $\phi(\mathbf{t})=(\C_\mathfrak{a},\T_\mathfrak{a})$. Now, we consider the following exact sequence in $fp(R\text{-Mod})$:
\[
{0 \longrightarrow t(R) \longrightarrow R \longrightarrow (1:t)(R) \longrightarrow 0.}
\]
Since  $t(R)\in \mathcal{X}\subseteq \varinjlim \mathcal{X}= \mathcal{C}_\mathfrak{a}$ and that $(1:t)(R)\in \mathcal{Y}\subseteq \varinjlim \mathcal{Y}= \mathcal{T}_\mathfrak{a}$, we obtain that $t(R)=\mathfrak{a}$ and that $(1:t)(R)={R}/{\mathfrak{a}}$, but $\mathfrak{a}$ is not finitely generated. This is a contradiction.
\end{proof}

\begin{remark} \label{no-tCG-fin-pres}
For the pair $(\C_\mathfrak{a},\T_\mathfrak{a})$ in the proof of the previous theorem, there is no finitely presented module $M$, such that $\text{Gen}(M)=\C_\mathfrak{a}$. Indeed, if such finitely presented module $M$ exists, then it is a direct summand of a finitely generated free module, given that  $R$ is a VNR ring. Furthermore, the fact that $\mathfrak{a} M = M$, implies that there is an integer $n$ such that $e_n  M = 0$, where $e_n$ is the ring element with 1  in the $n$-th component and 0 everywhere else. Hence  $Re_n \not \in \text{Gen}(M)$  and $R e_n \in \C_{\mathfrak{a}}$.
\end{remark}

\section{Construction of {\tCG} torsion pairs and torsion pairs of finite type} \label{S:Construction}

In this section we will present an explicit example of a torsion pair of finite type that is not a {\tCG} torsion pair. 
%
%%In the previous section, we showed that every {\tCG} torsion pair is of finite type. Therefore, in this section we will work with some classical torsion pair of finite type. % it is natural to ask when the converse is true or what separates us from this fact.
%%
%%
%%
%%Given that we don't know whether every torsion pair of finite type is a $t$CG torsion pair, then state the following question.
%%
%%\begin{question} \label{question}
%%Let $R$ be a ring and $\mathbf{t}=(\mathcal{T,F})$ be a torsion pair such that $\mathcal{F}$ is closed under direct limits. Is $\mathbf{t}$ a {\tCG} torsion pair? 
%%\end{question}
%
%
%
The usual example of a finite type torsion pair is the left constituent pair of a TTF-triple. The following result characterizes which  of these left constituent pairs are {\tCG} torsion pairs. If $N$ is a finitely generated submodule of $M$, then we will write $N \subseteq_{fg} M$.

\begin{theorem}
Let $\mathfrak{a}$ be a two-sided idempotent ideal of $R$ and consider the set $\mathcal{S}_{\mathfrak{a}} = \{ {\mathfrak{a}^{(n)}}/({K \cap \mathfrak{a}^{(n)}}) : n\in \mathbb{N}, K \subseteq_{fg} R^{(n)}  \text{ such that } K + \mathfrak{a}^{(n)} = R^{(n)} \}$. 

Then the torsion pair $(\C_\mathfrak{a},\T_\mathfrak{a})$ is a $t$CG torsion pair if and only if the class $\T_\mathfrak{a} = \bigcap_{S \in \mathcal{S}_{\mathfrak{a}}} \Ker(\Hom (S,?))$.
\end{theorem}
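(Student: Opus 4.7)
The plan is to apply Theorem \ref{Teo. tCG} after establishing that $\mathcal{S}_\mathfrak{a}$ is, up to isomorphism, a representative family for the finitely presented $R$-modules in $\C_\mathfrak{a}$. As a preliminary step I would check that each $S = \mathfrak{a}^{(n)}/(K \cap \mathfrak{a}^{(n)}) \in \mathcal{S}_\mathfrak{a}$ is finitely presented and lies in $\C_\mathfrak{a}$. The hypothesis $K + \mathfrak{a}^{(n)} = R^{(n)}$ makes the composite $\mathfrak{a}^{(n)} \hookrightarrow R^{(n)} \twoheadrightarrow R^{(n)}/K$ surjective with kernel $K \cap \mathfrak{a}^{(n)}$, so $S \cong R^{(n)}/K$; since $K$ is finitely generated, $S$ is finitely presented. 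Since $\mathfrak{a}$ is idempotent, $\mathfrak{a}\cdot\mathfrak{a}^{(n)} = \mathfrak{a}^{(n)}$, so $\mathfrak{a} S = S$ and $S \in \C_\mathfrak{a}$. This already settles the $(\Leftarrow)$ direction: Theorem \ref{Teo. tCG} applies directly to the set $\mathcal{S}_\mathfrak{a}$.

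For $(\Rightarrow)$, suppose $(\C_\mathfrak{a}, \T_\mathfrak{a})$ is a $t$CG torsion pair. Theorem \ref{Teo. tCG} provides a family $\{T_\lambda\}_{\lambda \in \Lambda}$ of finitely presented modules in $\C_\mathfrak{a}$ with $\T_\mathfrak{a} = \bigcap_{\lambda \in \Lambda} \Ker(\Hom_R(T_\lambda, ?))$. The key step is to exhibit each $T_\lambda$ as an element of $\mathcal{S}_\mathfrak{a}$. Choosing a finite free presentation $R^{(n_\lambda)} \xrightarrow{f_\lambda} R^{(m_\lambda)} \to T_\lambda \to 0$ and setting $K_\lambda := \text{Im}(f_\lambda) \subseteq_{fg} R^{(m_\lambda)}$, the condition $T_\lambda \in \C_\mathfrak{a}$ translates to $\mathfrak{a}(R^{(m_\lambda)}/K_\lambda) = R^{(m_\lambda)}/K_\lambda$, i.e.\ $\mathfrak{a}^{(m_\lambda)} + K_\lambda = R^{(m_\lambda)}$. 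The identification from the first paragraph then yields $T_\lambda \cong \mathfrak{a}^{(m_\lambda)}/(K_\lambda \cap \mathfrak{a}^{(m_\lambda)}) \in \mathcal{S}_\mathfrak{a}$. Hence $\bigcap_{S \in \mathcal{S}_\mathfrak{a}} \Ker(\Hom_R(S, ?)) \subseteq \bigcap_{\lambda} \Ker(\Hom_R(T_\lambda, ?)) = \T_\mathfrak{a}$; the reverse inclusion is automatic because $\mathcal{S}_\mathfrak{a} \subseteq \C_\mathfrak{a}$ and $(\C_\mathfrak{a}, \T_\mathfrak{a})$ is a torsion pair.

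I do not anticipate a serious obstacle. The entire content of the statement amounts to recognizing that the conditions defining $\mathcal{S}_\mathfrak{a}$ (namely $K$ finitely generated and $K + \mathfrak{a}^{(n)} = R^{(n)}$) are precisely what is needed to parametrize the finitely presented objects of $\C_\mathfrak{a}$; the only calculation requiring care is the equivalence between $\mathfrak{a}(R^{(m)}/K) = R^{(m)}/K$ and $\mathfrak{a}^{(m)} + K = R^{(m)}$. Once this bookkeeping is in place, the theorem is a direct corollary of Theorem \ref{Teo. tCG}.
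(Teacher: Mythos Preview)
Your proposal is correct and follows essentially the same route as the paper's proof: both directions reduce to Theorem~\ref{Teo. tCG} after observing that $\mathcal{S}_\mathfrak{a}$ coincides (up to isomorphism) with the finitely presented modules in $\C_\mathfrak{a}$, via the isomorphism $\mathfrak{a}^{(n)}/(K\cap\mathfrak{a}^{(n)})\cong R^{(n)}/K$ and the translation of $\mathfrak{a}T_\lambda=T_\lambda$ into $K_\lambda+\mathfrak{a}^{(m_\lambda)}=R^{(m_\lambda)}$. The only cosmetic difference is that the paper verifies $S\in\C_\mathfrak{a}$ by noting $S\in\Gen(\mathfrak{a})$, whereas you use the idempotency of $\mathfrak{a}$ to get $\mathfrak{a}S=S$; these are equivalent descriptions of $\C_\mathfrak{a}$.
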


\begin{proof}
We will begin by showing that if $S \in \mathcal{S}_{\mathfrak{a}}$, then $S \in \C_\mathfrak{a} \cap fp(R\text{-Mod})$. Indeed, if $K \subseteq_{fg} R^{(n)}$, with $n \in \mathbb{N}$, such that $K + \mathfrak{a}^{(n)} = R^{(n)}$, then 
\[
\frac{R^{(n)}}{K} = \frac{K + \mathfrak{a}^{(n)}}{K} \cong \frac{\mathfrak{a}^{(n)}}{K \cap \mathfrak{a}^{(n)}}.
\]
Since $K$ is finitely generated, then  ${\mathfrak{a}^{(n)}}/{K \cap \mathfrak{a}^{(n)}}$ is finitely presented and it is also in $\text{Gen}(\mathfrak{a}) = \C_\mathfrak{a}$. Hence, if $\T_\mathfrak{a} = \bigcap_{S \in \mathcal{S}_{\mathfrak{a}}} \Ker(\Hom (S,?))$, then by Theorem \ref{Teo. tCG}, the pair $(\C_\mathfrak{a},\T_\mathfrak{a})$ is a $t$CG torsion pair.

For the converse, let's assume that the pair $(\C_\mathfrak{a},\T_\mathfrak{a})$ is a $t$CG torsion pair. Again by Theorem \ref{Teo. tCG} there exists a set $\{ C_{\lambda} \}_{\lambda \in \Lambda}$ in $\C_\mathfrak{a} \cap fp(R\text{-Mod})$, such that $ \T_{\mathfrak{a}} = \bigcap_{\lambda \in \Lambda} \Ker (\Hom (C_{\lambda},?))$. Then for each $\lambda$ we have the following commutative diagram:
\[
\begin{tikzpicture}[node distance=1.5cm]
\node (01) {$0$};
\node (Ka) [right of=01] {$\bar{K}_{\lambda}$};
\node (a) [right of=Ka] {$\mathfrak{a}^{(n_{\lambda})}$};
\node (Ca) [right of=a] {$C_{\lambda}$};
\node (02) [right of=Ca] {$0$};
\node (03) [below of=01] {$0$};
\node (Kb) [right of=03] {${K}_{\lambda}$};
\node (R) [right of=Kb] {$R^{(n_{\lambda})}$};
\node (Cb) [right of=R] {$C_{\lambda}$};
\node (04) [right of=Cb] {$0$,};
\draw[->] (01) to (Ka);
\draw[->] (Ka) to (a);
\draw[->] (a) to node[above]{\tiny $\bar{q}_{\lambda}$} (Ca);
\draw[->] (Ca) to (02);
\draw[->] (03) to (Kb);
\draw[->] (Kb) to (R);
\draw[->] (R) to node[above]{\tiny $q_{\lambda}$} (Cb);
\draw[->] (Cb) to (04);
\draw[right hook->] (Ka) to (Kb);
\draw[right hook->] (a) to node[right]{\tiny $\iota_{n_{\lambda}}$} (R);
\draw[double equal sign distance] (Ca) to (Cb);
\end{tikzpicture}
\]
where $q_{\lambda}$ is the canonical epimorphism and $\iota_{n_{\lambda}}$ is the canonical inclusion, for some $n_{\lambda} \in \mathbb{N}$. Note that $\bar{q}_{\lambda}$ is an epimorphism, since $\mathfrak{a}C_{\lambda} = C_{\lambda}$, and it follows that $K_{\lambda} + \mathfrak{a}^{(n_{\lambda})} = R^{(n_{\lambda})}$. So $C_{\lambda} \in \mathcal{S}_{\mathfrak{a}}$, for each $\lambda \in \Lambda$.  To complete the proof we observe that: 
\[
\T_{\mathfrak{a}} \subseteq \bigcap_{S \in \mathcal{S}_{\mathfrak{a}}}  \Ker( \Hom(S,?)) \subseteq \bigcap_{\lambda \in \Lambda} \Ker (\Hom (C_{\lambda},?)) = \T_{\mathfrak{a}}.
\]

\end{proof}

Given that in the previous result we are dealing with the following module identity $K+\mathfrak{a}^{(n)} = R^{(n)}$, it seems unavoidable to investigate the relation of $\mathfrak{a}$  with superflous ideals. Recall, that an ideal $I$ of a ring $R$ is said to be \emph{superflous} if $I+J = R$, for any other ideal $J$, then $J=R$. Indeed, we have the following result, which also provides an example of a torsion pair of finite type that is not a {\tCG} torsion pair. 

\begin{corollary}
Let $\mathfrak{a}$ be a two-sided idempotent ideal of a ring $R$. If $\mathfrak{a}$ is not finitely generated and contained in the Jacobson radical of $R$, then $\C_{\mathfrak{a}} \cap fp(\text{$R$-Mod}) = 0$. This implies that $(\C_\mathfrak{a},\T_\mathfrak{a})$ is not a $t$CG  torsion pair.
\end{corollary}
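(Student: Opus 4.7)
The plan is to establish the two assertions separately, using Nakayama's lemma for the first and the characterization provided by Theorem \ref{Teo. tCG} for the second.

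First, I would take an arbitrary $M \in \C_\mathfrak{a} \cap fp(R\text{-Mod})$. By definition of $\C_\mathfrak{a}$, one has $\mathfrak{a}M = M$, and $M$ is in particular finitely generated. Since $\mathfrak{a} \subseteq J(R)$, Nakayama's lemma forces $M = 0$. This yields $\C_\mathfrak{a} \cap fp(R\text{-Mod}) = 0$.

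For the implication, I would argue by contradiction. Suppose $(\C_\mathfrak{a},\T_\mathfrak{a})$ is a $t$CG torsion pair. Then Theorem \ref{Teo. tCG} supplies a set $\{T_\lambda\}_{\lambda \in \Lambda}$ of finitely presented $R$-modules lying in $\C_\mathfrak{a}$ such that $\T_\mathfrak{a} = \bigcap_{\lambda \in \Lambda} \Ker(\Hom_R(T_\lambda,?))$. Each $T_\lambda$ then belongs to $\C_\mathfrak{a} \cap fp(R\text{-Mod})$, which is zero by the first part; so every $T_\lambda$ vanishes, whence $\T_\mathfrak{a} = R\text{-Mod}$ and consequently $\C_\mathfrak{a} = {}^{\perp}\T_\mathfrak{a} = 0$. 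On the other hand, the idempotence of $\mathfrak{a}$ gives $\mathfrak{a}\cdot\mathfrak{a} = \mathfrak{a}$, so $\mathfrak{a} \in \C_\mathfrak{a}$; and the hypothesis that $\mathfrak{a}$ is not finitely generated forces $\mathfrak{a} \neq 0$, since the zero ideal is trivially finitely generated. This contradicts $\C_\mathfrak{a} = 0$.

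The argument is short because the right tool is identified at the outset: Nakayama's lemma is what bridges the two halves of the statement, converting the radical hypothesis on $\mathfrak{a}$ into the sharp equality $\C_\mathfrak{a} \cap fp(R\text{-Mod}) = 0$, after which Theorem \ref{Teo. tCG} delivers the failure of the $t$CG property with essentially no further work. I do not anticipate any serious obstacle in the proof.
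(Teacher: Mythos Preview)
Your proof is correct and in fact more direct than the paper's. For the first assertion, you apply Nakayama's lemma immediately to an arbitrary $M \in \C_\mathfrak{a} \cap fp(R\text{-Mod})$: from $\mathfrak{a}M = M$, $M$ finitely generated, and $\mathfrak{a} \subseteq J(R)$, Nakayama gives $M = 0$ in one stroke. The paper instead routes through the preceding theorem, writing any such $S$ as $\mathfrak{a}^{(n)}/(K \cap \mathfrak{a}^{(n)})$ with $K \subseteq_{fg} R^{(n)}$ and $K + \mathfrak{a}^{(n)} = R^{(n)}$, then uses superfluousness of $\mathfrak{a}$ to force $\pi_n(K) = R$ and reduces the rank $n$ recursively until $S = 0$. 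Your argument bypasses this induction and removes the dependence on the structural description of $\mathcal{S}_\mathfrak{a}$; what you gain is brevity and a self-contained proof, while the paper's route has the minor advantage of illustrating how the set $\mathcal{S}_\mathfrak{a}$ from the previous theorem behaves under the superfluous hypothesis. For the second assertion both proofs invoke Theorem~\ref{Teo. tCG}; the paper only says the conclusion ``follows'' (having already noted the pair is nontrivial), whereas you spell out the contradiction via $\mathfrak{a} \in \C_\mathfrak{a} \setminus \{0\}$, which is precisely the missing sentence.
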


\begin{proof}
Given that $\mathfrak{a}$ is contained in the Jacobson radical, then Nakayama's lemma tells us that if $\mathfrak{a}$ is finitely generated, then $\mathfrak{a}=0$. So suppose $\mathfrak{a}$ is not finitely generated, which implies that the torsion pair $(\C_\mathfrak{a},\T_\mathfrak{a})$ is not trivial.

Let $S \in \C_{\mathfrak{a}} \cap fp(\text{$R$-Mod})$, then by the proof of the previous theorem, we have that: 
\[
S\cong \frac{\mathfrak{a}^{(n)}}{K \cap \mathfrak{a}^{(n)}},
\]
for some $n \in \mathbb N$ and some $K \subseteq_{fg} R^{(n)}$, such that $K + \mathfrak{a}^{(n)} = R^{(n)}$. This last equality shows that $\pi_{n}(K) +\mathfrak{a}=R$, where $\pi_n$ denotes the $n$-th projection. Since, by assumption, $\mathfrak{a}$ is a superflous ideal of $R$, we have that $\pi_n(K) = R$ and the following commutative diagram: 

\[
\begin{tikzpicture}[node distance=1.5cm]
\node (0ul) {$0$};
\node (Kiu) [right of=0ul] {$K_{n-1}$};
\node (J) [right of=Ki] {$R^{(n-1)}$};
\node (t)  [right of=J] {$S$};
\node (0ur) [right of=t] {$0$};
\node (0dl) [below of=0ul] {$0$};
\node (Kid) [right of=0dl] {$K$};
\node (Rnd) [below of=J] {$R^{(n)}$};
\node (R/K) [below of=t] {$S$};
\node (0dr) [right of=R/K] {$0$};
\node (R/J) [below of=Rnd] {$R$};
\node (1t) [below of=Kid] {$\pi_n(K)$};
\draw[->] (0ul) -- (Kiu);
\draw[->] (Kiu) -- (J);
\draw[->] (J) -- (t);
\draw[->] (t) -- (0ur);
\draw[->] (0dl) -- (Kid);
\draw[->] (Kid) -- (Rnd);
\draw[->] (Rnd) -- (R/K);
\draw[->] (R/K) -- (0dr);
\draw[right hook->] (J) --  (Rnd);
\draw[right hook->] (Kiu) -- (Kid);
\draw[->>] (Kid) -- node[right]{$\pi_n$}(1t);
\draw[double equal sign distance] (t) -- (R/K);
\draw[->>] (Rnd) -- node[right]{$\pi_n$} (R/J);
\draw[double equal sign distance] (R/J) --  (1t);
%
%\node at (3.5,-0.45) {$\ulcorner$};
\end{tikzpicture}
\]
Since $\mathfrak{a}S=S$, we deduce that $K_{n-1}+\mathfrak{a}^{(n-1)}=R^{(n-1)}$. Using this argument in a recursive manner, we obtain that $K_1+\mathfrak{a}=R$ and that $R/K_1\cong S$. Once again, the ideal $\mathfrak{a}$ is superflous,  hence  $K_1=R$, which in turn gives us that $S=0$. The final statement of the corollary follows from Theorem \ref{Teo. tCG}.
%Since, by assumption, $\mathfrak{a}$ is a superflous ideal of $R$, we have that $K = R^{(n)}$. Thus 
%\[
%S\cong \frac{\mathfrak{a}^{(n)}}{K \cap \mathfrak{a}^{(n)}} = \frac{\mathfrak{a}^{(n)}}{R^{(n)} \cap \mathfrak{a}^{(n)}} = \frac{\mathfrak{a}^{(n)}}{\mathfrak{a}^{(n)}} = 0.
%\]
%The final statement of the corollary follows from Theorem \ref{Teo. tCG}.
\end{proof}

\begin{example}
Consider, $R = C([0,1])$, the commutative ring of continuous real valued functions over the unit interval. For  $x \in [0,1]$  we consider a maximal ideal $M_x$ given by the functions in $R$ such that its value at $x$ is 0. Now localize $R$ at $M_x$, and  let $\mathfrak{a} = {M_x}_{M_x}$, which is an idempotent, two-sided, not finitely generated ideal, and observe that the Jacobson radical of $R_{M_x}$ is precisely ${M_x}_{M_x}$. Applying the previous corollary, we get that the torsion pair $(\C_\mathfrak{a},\T_\mathfrak{a})$ is not a $t$CG torsion pair in $R_{M_x}$-Mod.
\end{example}

\begin{lemma}\label{lem. necesario}
Let $R$ be a VNR ring. If $\mathfrak{a}$ is a two-sided ideal of $R$, then for each $a\in \mathfrak{a}$, we get $\mathfrak{a}(Ra)=Ra$.
\end{lemma}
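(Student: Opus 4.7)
The plan is to prove the two inclusions $\mathfrak{a}(Ra) \subseteq Ra$ and $Ra \subseteq \mathfrak{a}(Ra)$ separately, with the von Neumann regular hypothesis only entering the second one.

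The first inclusion is essentially formal: a typical element of $\mathfrak{a}(Ra)$ is a finite sum of products $b(ra)$ with $b \in \mathfrak{a}$ and $r \in R$, and since $b(ra) = (br)a \in Ra$, we get $\mathfrak{a}(Ra) \subseteq Ra$ without using VNR at all. I would dispense with this in a single line.

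The substantive part is $Ra \subseteq \mathfrak{a}(Ra)$. Here the idea is to exploit the defining identity of a VNR ring to ``factor out'' an element of $\mathfrak{a}$ on the left. Concretely, pick $x \in R$ with $a = axa$ (which exists since $R$ is VNR), and for any $ra \in Ra$ rewrite
\[
ra \;=\; r(axa) \;=\; (rax)\,a.
\]
Now I would argue that $rax \in \mathfrak{a}$: since $a \in \mathfrak{a}$ and $\mathfrak{a}$ is a two-sided ideal, $rax \in R\mathfrak{a}R \subseteq \mathfrak{a}$. Since also $a \in Ra$, the displayed equation exhibits $ra$ as an element of $\mathfrak{a}\cdot Ra$, yielding the desired inclusion.

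I do not foresee any real obstacle here; the only subtle point is making sure the VNR element $x$ is used so that the factor pulled to the left genuinely lies in $\mathfrak{a}$, and that is guaranteed because the factor is $rax$ with $a$ sandwiched in the middle and $\mathfrak{a}$ two-sided. The proof should fit comfortably in a few lines.
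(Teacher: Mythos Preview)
Your proof is correct and follows essentially the same approach as the paper: both use the VNR identity $a=axa$ to write $ra=(rax)a$ with $rax\in\mathfrak{a}$. The only cosmetic difference is that the paper first simplifies $\mathfrak{a}(Ra)=(\mathfrak{a}R)a=\mathfrak{a}a$ and then shows $Ra\subseteq\mathfrak{a}a$, whereas you work directly with $\mathfrak{a}(Ra)$.
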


\begin{proof}
Note that $\mathfrak{a}(Ra)=(\mathfrak{a}R)a=\mathfrak{a}a$. On the other hand, there exists $x$ in $R$, such that $axa=a$. Thus, for every $r\in R$, we get $ra=(rax)a$ and since $\mathfrak{a}$ is a two-sided ideal of $R$, we obtain that $ra\in \mathfrak{a}a$, so that $Ra\subset \mathfrak{a}a$. This shows that $Ra\subseteq \mathfrak{a}a=\mathfrak{a}(Ra)\subseteq Ra$. 
\end{proof}

The following result gives us an explicit description of {\tCG} torsion pairs over a VNR ring.

\begin{theorem}\label{Teo. Von}
Let $R$ be a VNR ring and let $\mathbf{t}=(\T,\F)$ be a torsion pair in $R$-Mod. The following assertions are equivalent:
\begin{enumerate}
\item \label{t-is-tCG-3} $\mathbf{t}$ is a {\tCG} torsion pair;
\item \label{exists-fin-gen} There exists $\{T_{\lambda}\}_{\lambda \in \Lambda}$, a set of finitely generated projective $R$-modules such that $\mathcal{T}=\text{Gen}(\coprod T_{\lambda})$;
\item \label{idempotent} There exists a unique idempotent two-sided ideal $\mathfrak{a}$ of $R$ such that $\T = \C_\mathfrak{a}=\text{Gen}(\mathfrak{a})=\{T  \in  R\text{-Mod}: \mathfrak{a}T=T\}$;
\item \label{left-const} $\mathbf{t}$ is the left constituent pair of a TTF triple in $R$-Mod. 
\end{enumerate}
\end{theorem}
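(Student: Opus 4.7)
The plan is to close the logical loop $(1) \Rightarrow (2) \Rightarrow (3) \Rightarrow (4) \Rightarrow (1)$. Two features of a VNR ring $R$ will be used throughout: first, every finitely presented $R$-module is finitely generated projective (since finitely generated submodules of free modules over a VNR ring are direct summands); second, every principal left ideal $Ra$ has the form $Re$ for an idempotent $e \in R$, obtained by setting $e = xa$ where $axa = a$.

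For $(1) \Rightarrow (2)$, I would apply Theorem \ref{Teo. tCG} to obtain a set $\{T_\lambda\}$ of finitely presented modules in $\T$ with $\F = \bigcap_\lambda \Ker(\Hom_R(T_\lambda,?))$, which are then finitely generated projective. To upgrade to $\T = \Gen(\coprod T_\lambda)$, given $T \in \T$ I would form the trace submodule $T'$ spanned by images of all morphisms $T_\lambda \to T$. Projectivity of each $T_\lambda$ forces any morphism $T_\lambda \to T/T'$ to lift through $T$, hence to land in $T'$, so $T/T' \in \F$. But $T/T' \in \T$ as a quotient, giving $T = T' \in \Gen(\coprod T_\lambda)$.

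For $(2) \Rightarrow (3)$, set $\mathfrak{a} = \sum_\lambda \tau_\lambda$, where $\tau_\lambda \subseteq R$ is the trace ideal of $T_\lambda$. By the dual basis lemma, $\tau_\lambda T_\lambda = T_\lambda$, hence $\tau_\lambda^2 = \tau_\lambda$, so each $\tau_\lambda$ is a two-sided idempotent ideal, and the sum $\mathfrak{a}$ inherits both properties. One checks $\Gen(\coprod T_\lambda) = \Gen(\mathfrak{a})$ by noting that each $T_\lambda = \mathfrak{a} T_\lambda$ is $\mathfrak{a}$-generated, while $\mathfrak{a}$ is itself an epimorphic image of a coproduct of copies of the $T_\lambda$ via evaluation. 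The preliminaries then identify $\Gen(\mathfrak{a})$ with $\C_\mathfrak{a}$, and the Stenström bijection between TTF-triples and idempotent two-sided ideals provides uniqueness of $\mathfrak{a}$. The equivalence $(3) \Leftrightarrow (4)$ is the content of Proposition VI.6.12 in Stenström recalled in Section \ref{S:Prelim}.

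For $(4) \Rightarrow (1)$, I would take the candidate set $\mathcal{S} = \{Ra : a \in \mathfrak{a}\}$. Each $Ra$ lies in $\C_\mathfrak{a}$ by Lemma \ref{lem. necesario}, and writing $Ra = Re$ for an idempotent $e = xa$ shows $Ra$ is finitely generated projective, hence finitely presented. To identify $\T_\mathfrak{a}$ with $\bigcap_{a \in \mathfrak{a}} \Ker(\Hom_R(Ra, ?))$, I would use the natural isomorphism $\Hom_R(Re, T) \cong eT$; if every such idempotent generator satisfies $(xa)T = 0$, then $aT = (axa)T = a((xa)T) = 0$, forcing $\mathfrak{a} T = 0$, while the reverse inclusion is immediate. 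A final application of Theorem \ref{Teo. tCG} yields $(1)$. The subtlest step should be $(1) \Rightarrow (2)$: promoting the bare Hom-orthogonality $\F = \bigcap \Ker(\Hom_R(T_\lambda,?))$ of Theorem \ref{Teo. tCG} to a genuine generation statement $\T = \Gen(\coprod T_\lambda)$ really does require projectivity of the $T_\lambda$, and it is precisely here that the VNR hypothesis does the work it cannot be forced to do in the general coherent setting of Theorem \ref{Teo. Coherent}.
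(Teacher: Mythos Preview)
Your proof is correct and follows the same $(1)\Rightarrow(2)\Rightarrow(3)\Rightarrow(4)\Rightarrow(1)$ cycle as the paper, with the same key observation that over a VNR ring the finitely presented modules of Theorem~\ref{Teo. tCG} are projective. The only differences are cosmetic: for $(2)\Rightarrow(3)$ the paper simply invokes \cite[Proposition~VI.9.4 and Corollary~VI.9.5]{S} where you spell out the trace-ideal construction, and for $(4)\Rightarrow(1)$ the paper takes all finitely generated submodules $I_i$ of $\mathfrak{a}$ and appeals to Corollary~\ref{Cor. relax} (using that the left constituent of a TTF triple is of finite type), whereas you use the principal ideals $Ra=Re$ and apply Theorem~\ref{Teo. tCG} directly---a slightly more self-contained variant.
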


\begin{proof}
\eqref{t-is-tCG-3} $\implies$ \eqref{exists-fin-gen}. By Theorem \ref{Teo. tCG}, there exists $\{T_{\lambda}\}_{\lambda \in \Lambda}$, a set of finitely presented $R$-modules in $\T$, such that $\F={\bigcap}_{\lambda \in \Lambda} \text{Ker}(\text{Hom}_{R}(T_{\lambda},?))$. Since $R$ is a VNR ring, we get that each $T_{\lambda}$ is a finitely generated projective $R$-module. Therefore, $\coprod T_{\lambda}$ is also a projective $R$-module and hence $\text{Gen}(\coprod T_{\lambda})$ is a torsion class in $R$-Mod. In this case, the equality $\F={\bigcap}_{\lambda \in \Lambda} \text{Ker}(\text{Hom}_{R}(T_{\lambda},?))$, implies $\T=\text{Gen}(\coprod T_{\lambda})$. 

\noindent \eqref{exists-fin-gen} $\implies$ \eqref{idempotent}. It follows from \cite[Proposition VI.9.4]{S} and \cite[Corollary VI.9.5]{S}. 

\noindent \eqref{idempotent} $\implies$ \eqref{left-const}. It follows from \cite[Chapter VI, Section 8]{S}. 

\noindent \eqref{left-const} $\implies$ \eqref{t-is-tCG-3}. We suppose that there exists a TTF triple in $R$-Mod of the form $(\mathcal{T},\mathcal{F},\mathcal{F}^{\perp})$. Hence, $\mathbf{t}=\mathcal{(T,F)}$ is a torsion pair of finite type, and there exists a unique idempotent two-sided ideal $\mathfrak{a}$ of $R$, such that $\T = \C_\mathfrak{a}=\text{Gen}(\mathfrak{a})=\{T \in R\text{-Mod}: \mathfrak{a}T=T\}$. Now, we fix a direct system $(I_i)_{i \in I}$ of finitely generated submodules of $\mathfrak{a}$ such that $\mathfrak{a}={\bigcup}_{i \in I}I_i$. By Lemma \ref{lem. necesario}, we have $\mathfrak{a}I_i=I_i$, for all $i\in I$, hence each $I_i$ is in $\mathcal{T} = \C_{\mathfrak{a}}$. The result follows from Corollary \ref{Cor. relax}, since $\F=\text{Ker}(\text{Hom}_{R}(\mathfrak{a},?))=\text{Ker}(\text{Hom}_{R}({\bigcup}_{i \in I} I_i,?))={\bigcap}_{i \in I} \text{Ker}(\text{Hom}_{R}(I_i,?))$.
\end{proof}

Given a torsion pair in $R$-Mod  of the form $(\text{Gen}(V) , \text{Ker}({\Hom_R(V,?)} ))$ and a direct system of finitely presented modules $\{ V_{\lambda} \}_{\lambda \in \Lambda}$ with $V = \varinjlim V_{\lambda}$, we get from  \cite[Example VI.2.3]{S} that the pair $(^{\perp}(\bigcap \text{Ker}(\Hom_R(V_{\lambda},?))),\bigcap \text{Ker}(\Hom_R(V_{\lambda},?)))$ is a torsion pair. From Theorem \ref{Teo. tCG} we get that this last torsion pair is a {\tCG} torsion pair; furthermore, we get that $\text{Gen}(V) \subseteq {}^{\perp}(\bigcap \text{Ker}(\Hom_R(V_{\lambda},?)))$. This fact motivates the following question.
\begin{question}
Given $R$ a ring and  $\mathbf{t}=(\mathcal{T,F})$ a torsion pair in $R$-Mod, such that $\mathcal{T}=\text{Gen}(V)$ for some $R$-module $V$. Is there  a minimal torsion class $\mathcal{T}'$ of a {\tCG} torsion pair, such that $\mathcal{T} \subseteq \mathcal{T}'$?
\end{question}

\section*{Acknowledgements}

The authors thank Manuel Saor\'in for several helpful discussions, and the comments of the referee which certainly have  improved  the quality of this paper.

%\section*{References}
\bibliographystyle{amsalpha}
\bibliography{tCG-biblio}
\end{document}